\newtheorem{Theorem}{Theorem}[section]
\newtheorem{Lemma}{Lemma}[section]
\newtheorem{Proposition}{Proposition}[section]
\theoremstyle{definition}
\newtheorem{Definition}{Definition}[section]
\theoremstyle{remark}
\numberwithin{equation}{section}
\renewcommand{\u}{{\bf u}}
\newcommand{\g}{{\bf g}}
\renewcommand{\H}{{\bf H}}
\renewcommand{\div}{{\rm div}}
\newcommand{\R}{{\mathbb R}}
	\newcommand{\q}{{\bf q}}
\def\hf1{^\f{1}{1-\xi^2}}
\def\be{\begin{equation}}
\def\en{\end{equation}}
\def\bs{\begin{split}}
\def\es{\end{split}}
\newcommand{\PP}{\mathbb{P}}
\renewcommand{\d}{{\bf d}}
\newcommand{\ww}{{\bf w}}
\newcommand{\f}{{\bf f}}
\newcommand{\loc}{\rm loc}
\newcommand{\essinf}{{\rm ess\inf }}
\author{Hengrong Du, \ Yimei Li, \ Changyou Wang}
\address{Department of Mathematics, Purdue University, West Lafayette, IN, 47907, USA.}
\email{du155@purdue.edu}
\address{Department of Mathematics, Beijing Normal University, Beijing, PRC.} \email{lym@mail.bnu.edu.cn}
\address{Department of Mathematics, Purdue University, West Lafayette, IN, 47907, USA.}
\email{wang2482@purdue.edu}
\title[Non-isothermal LCF]
{Weak solutions of  non-isothermal nematic liquid crystal flow in dimension three}
\keywords{Non-isothermal nematic liquid crystals, Ginzburg-Landau approximation, Entropy inequalities}
\subjclass[2000]{35A05, 76A10, 76D03.}
\date{\today}
\begin{document}

\begin{abstract}
	For any smooth domain $\Omega\subset \R^3$, we establish the existence of a global  weak solution $(\u,\d, \theta)$ to the simplified, non-isothermal Ericksen-Leslie system modeling the hydrodynamic motion of nematic liquid crystals with variable temperature for any initial and boundary data $(\u_0, \d_0, \theta_0)\in\mathbf{H}\times H^1(\Omega, \mathbb{S}^2)\times L^1(\Omega)$, with $ \d_0(\Omega)\subset\mathbb{S}_+^2$ (the upper half sphere) and $\displaystyle\essinf_\Omega \theta_0>0$.
	
\end{abstract}

\maketitle
\section{Introduction}
\indent The liquid crystal constitutes a state of matter which is intermediate between the solid and the liquid. In the nematic phase, molecules move like those in fluid, while they tend to reveal preferable orientations. A non-isothermal liquid crystal flow in the nematic phase can be described in terms of three physical variables: the velocity field $\u$ of
the underlying fluid, the director field $\d$ representing the averaged orientation of liquid crystal molecules, and the background temperature $\theta$. The evolution of the velocity field is governed by the incompressible Navier-Stokes system with stress tensors representing viscous and elastic effects. In the nematic case, the director field is driven by transported negative gradient flow of the Oseen-Frank energy functional which represents the internal microscopic damping \cite{DE, GP}.  We consider the non-isothermal setting in which the temperature is neither spatial nor temporal homogeneous and thus contributes to total dissipation of the whole system.

\indent A great deal of mathematical theories has been devoted to the study of nematic liquid crystals in the continuum formulation. In pioneering papers \cite{E1,E2,Leslie} Ericksen and Leslie have put forward a PDE model based on the principle of conservation laws and momentum balance. There has been extensive mathematical study of analytic issues of the simplified Ericksen-Leslie system. In 1989 Lin \cite{L1} first proposed a simplified Ericksen-Leslie model with one constant approximation for the Oseen-Frank energy:
$(\u, \d):\Omega\times\R_+\rightarrow \R^n\times\mathbb{S}^2$ solves
\begin{equation}
\left\{
\begin{array}{l}
\partial_t \u+\u\cdot \nabla\u+\nabla P=\mu\Delta \u-\nabla\cdot (\nabla \d\odot\nabla \d), \\
\nabla\cdot \u=0, \\
\partial_t\d+\u\cdot \nabla\d=\Delta\d+|\nabla \d|^2\d,
\end{array}
\right.
\label{eqn:1.2}
\end{equation}
where $\Omega\subset \R^n$ ($n=2$ or $3$), $P:\Omega\times \R_+\rightarrow\R$ denotes the pressure,  $\mu>0$ 
represents the viscosity constant of the fluid, and $(\nabla\d\odot\nabla\d)_{ij}=\sum_{k=1}^3 \partial_{x_i}\d^{(k)}\partial_{x_j}\d^{(k)}$ denotes the Ericksen stress tensor. {It is a system of the forced Navier-Stokes equation coupled with the transported harmonic map heat flow
to $\mathbb{S}^2$.  The readers can consult \cite{Temam} on the study of the Navier-Stokes equations and \cite{LW3}
for some recent developments on harmonic map heat flow.}
The rigorous mathematical analysis was initiated by Lin-Liu {\cite{LL1,LL2}} in which they established the well-posedness of so-called Ginzburg-Landau approximation of \eqref{eqn:1.2}: $(\u, \d):\Omega\times\R_+\rightarrow \R^n\times\R^3$ satisfies
\begin{equation}
  \left\{
    \begin{array}{l}
      \partial_t \u+\u\cdot \nabla\u+\nabla P=\mu\Delta \u-\nabla\cdot (\nabla \d\odot\nabla \d), \\
\nabla\cdot \u=0, \\
\partial_t\d+\u\cdot \nabla\d=\Delta\d+\frac{1}{\varepsilon^2}\left( 1-|\d|^2 \right)\d,
    \end{array}
    \right.
    \label{eqn:1.1}
  \end{equation}
where {$\varepsilon>0$} is the parameter of approximation. 
{They have obtained the existence of a unique, global strong solution in dimension $2$ and in dimension $3$ under large viscosity $\mu$. They have also studied  the
existence of suitable weak solutions and their partial regularity in dimension $3$,
which is analogous to the celebrated regularity theorem by Caffarelli-Kohn-Nirenberg \cite{CKN} (see also \cite{LIN}) for the dimension $3$ incompressible Navier-Stokes equation.}
  Later on Lin-Lin-Wang \cite{LLW} adopted a different approach to construct global {Leray-Hopf type weak solutions (see \cite{Leray})}
   for dimension $2$  to \eqref{eqn:1.2}  via the method of small energy regularity estimate. {Huang-Lin-Wang \cite{HLW} extended the works of \cite{LLW} to the general Ericksen-Leslie system by a blow up argument.}

The existence of global weak solution to \eqref{eqn:1.2} in dimension three is highly non-trivial due to the appearance of the super-critical nonlinear elastic stress term $\nabla\cdot(\nabla\d\odot\nabla\d)$. Some preliminary progress was made by Lin-Wang \cite{LW2}, where under the assumption that an initial configuration $\d_0$ lies in
the upper half sphere, i.e.,
 \begin{equation}
\d_0(\Omega)\subset \mathbb{S}^2_+:=\big\{y=(y^{1}, y^{2}, y^{3})\in \R^3: |y|=1,
\ {y}^{3}\geq 0\big\}.
\label{eqn:d0} \end{equation}
the existence of global weak solution was constructed by the Ginzburg-Laudau approximation method and
a delicate  blow-up analysis. { See \cite{LW1} for a review of recent progresses on the mathematical analysis of Ericksen-Leslie system.} 

Recently there has been considerable interest in the mathematical study for the hydrodynamics of 
non-isothermal nematic liquid crystals. Recall that 
a simplified, non-isothermal version of \eqref{eqn:1.1} can be described as follows.
Let $(\u,\d,\theta):\Omega\times\R_+\rightarrow\R^n\times\R^3\times\R_+$ solve
\begin{equation}
  \left\{
    \begin{array}{l}
      \partial_t \u+\u\cdot \nabla\u+\nabla P=\nabla\cdot(\mu(\theta)\nabla \u)-\nabla\cdot (\nabla\d\odot\nabla \d), \\
      \nabla\cdot \u=0, \\
      \partial_t\d+\u\cdot \nabla\d=\Delta \d+\frac{1}{\varepsilon^2}\left( 1-|\d|^2 \right)\d, \\
      \partial_t \theta+ \u\cdot \nabla\theta=-\nabla\cdot \q+\mu(\theta)|\nabla\u|^2+\big|\Delta \d+\frac{1}{\varepsilon^2}(1-|\d|^2)\d\big|^2,
    \end{array}
    \right.
    \label{eqn:1.3}
  \end{equation}
  where  $\q:\Omega\times\R_+\rightarrow \R^n$ is the heat flux.  Feireisl- Fr\'{e}mond-Rocca-Schimperna \cite{FFRS} proved the existence of a global weak solution to \eqref{eqn:1.3} in dimension $3$. Correspondingly, non-isothermal version of \eqref{eqn:1.2} reads $(\u, \d, \theta):\Omega\times\R_+\rightarrow\R^n\times\mathbb{S}^2\times\R_+$ solves
  \begin{equation}
  \left\{
  \begin{array}{l}
  \partial_t \u+\u\cdot \nabla\u+\nabla P=\nabla\cdot(\mu(\theta)\nabla \u)-\nabla\cdot (\nabla\d\odot\nabla \d), \\
  \nabla\cdot \u=0, \\
  \partial_t\d+\u\cdot \nabla\d=\Delta \d+|\nabla\d|^2\d, \\
  \partial_t \theta+ \u\cdot \nabla\theta=-\nabla\cdot \q+\mu(\theta)|\nabla\u|^2+\left|\Delta \d+|\nabla\d|^2\d\right|^2.
  \end{array}
  \right.
  \label{eqn:1.4}
  \end{equation}
Hieber-Pr\"{u}ss \cite{Hieber} have established the existence of a unique local $L^p-L^q$ strong solution to \eqref{eqn:1.4}, which can be extended to a global strong solution provided the initial data is close to an equilibrium state. For the general non-isothermal Ericksen-Leslie system, De Anna-Liu \cite{Anna} have obtained the existence of global strong solution in Besov spaces provided the Besov norm of the initial data is sufficiently small.  On $\mathbb{T}^2$, Li-Xin \cite{LX} have showed that there exists a global weak solution to \eqref{eqn:1.4}.  A natural question is that in  dimension $3$ whether \eqref{eqn:1.4} admits a global weak solution.
The main goal of this paper is to give a positive answer  under the additional assumption \eqref{eqn:d0}.  \\
\indent This paper is organized as follows. We devote Section $2$ to the derivation of thermodynamic consistency of a simplified, non-isothermal Ericksen-Leslie system for nematic liquid crystals. The weak formulation for \eqref{eqn:1.4} model is demonstrated in Section $3$. In Section $4$ we will establish the weak maximum principle for the free drifted Ginzburg-Landau heat flow with homogeneous Neumann boundary condition.  In Section $5$, we will establish a priori estimates and the existence of weak solutions to the non-isothermal Ericksen-Leslie system. In Appendix A, we provide a Faedo-Galerkin scheme for weak solutions to the non-isothermal model \eqref{eqn:1.3}.
\section{Thermal consistency of the non-isothermal nematic models}
\subsection{Non-isothermal Ginzburg-Landau approximation}
First we recall the equations of $\u$ and $\d$ in the non-isothermal Ginzburg-Laudau approximation \eqref{eqn:1.3}:
\begin{equation}
  \left\{
    \begin{array}{l}
      \partial_t\u+\u\cdot \nabla \u+\nabla P=\div\left(
      \mu(\theta)\nabla \u-\nabla \d\odot \nabla \d \right), \\
      \nabla\cdot \u=0,\\
      \partial_t\d+\u\cdot \nabla \d= \Delta \d-\f_\varepsilon(\d) ,
    \end{array}
    \right.
    \label{eqn:GinzburgLandau}
  \end{equation}
  where $\f_\varepsilon(\d)=\partial_{\d} F_\varepsilon(\d)$,  $F_\varepsilon (\d)=\frac{(|\d|^2-1)^2}{4\varepsilon^2}$. 
  
 The difference between \eqref{eqn:GinzburgLandau} and the isothermal case \eqref{eqn:1.1} is that the viscosity coefficient $\mu$ is a function of temperature $\theta$. Here the temperature plays a role as parameters both in the material coefficients and the heat conductivity coefficients, which is to be discussed later. To make the system \eqref{eqn:GinzburgLandau} a close system, we need the evolution equation for $\theta$. 
The equation of thermal dissipation is derived according to \textit{First and Second laws  of thermodynamics} \cite{SV}.  

  First we introduce  some basic concepts in thermodynamics.
  The internal energy density reads
  \begin{equation*}
    e_\varepsilon^{int}=\frac12{|\nabla \d|^2}+ F_\varepsilon(\d)+\theta,
    \label{}
  \end{equation*}
  and the Helmholtz free energy is given by
  \begin{equation*}
  \psi_\varepsilon=\frac12{|\nabla\d|^2}+ F_\varepsilon(\d)-\theta\ln \theta.
  \label{}
  \end{equation*}

Denote  the entropy  by {\color{red}$\eta$} in the \textit{Second law of thermodynamics},
which is determined by temperature through the Maxwell relation
  \begin{equation}
    \eta=-\frac{\partial \psi_\varepsilon}{\partial \theta}=1+\ln \theta.
    \label{2.4}
  \end{equation}
The internal energy can be obtained by (negative) Legendre transformation of free energy with respect to $\eta$, i.e.,
\begin{equation*}
  e^{int}_\varepsilon=\psi_\varepsilon+\eta\theta.
  \label{}
\end{equation*}
The heat flux $\q$ in the equations of both $\theta$ of \eqref{eqn:1.3} and \eqref{eqn:1.4}
satisfies the generalized Fourier law:
\begin{equation}
\q(\theta)=-k(\theta)\nabla\theta-h(\theta)(\nabla \theta\cdot\d)\d
\label{heat_flux}
\end{equation}
where $k(\theta)$ and $h(\theta)$ represent thermal conductivities.
The evolution of entropy can be written as follows.
\begin{equation}
  \eta_t+\u\cdot \nabla \eta=-\nabla\cdot \g+\Delta_\varepsilon,
  \label{2.7}
\end{equation}
where $\g$ is the entropy flux which is determined by the heat flux through the Clausius-Duhem relation
\begin{equation}
\q=\theta\g,
\label{2.8}
\end{equation}
 and the entropy production $\Delta_\varepsilon\ge 0$ is given by \eqref{Delta} below. \\

The thermal consistency of \eqref{eqn:1.3} is given by the following proposition.
\begin{Proposition}
	Suppose $(\u, \d, \theta)$ is a strong solution to \eqref{eqn:1.3}. Then\\
(1) {\rm{(}}\textit{First law of thermodynamics}{\rm{)}}. The total energy 
$e_\varepsilon^{total}=\frac12{|\u|^2}+e_\varepsilon^{int}$ is conservative. 
More precisely,  we have
	    \begin{equation}
	      \frac{D}{Dt}e_\varepsilon^{total}+\nabla\cdot (\Sigma+\q)=0,
	      \label{Sigma}
	    \end{equation}
	    where 
\begin{equation}\label{sigma}
\Sigma=P \u -\mu(\theta)\u\cdot \nabla \u+\nabla\d\odot\nabla\d\cdot \u-(\nabla \d)^T\frac{D \d}{Dt},
\end{equation}
and $\frac{D}{Dt}:=\frac{\partial}{\partial t}+\u\cdot \nabla$ denotes the material derivative.\\
(2) {\rm{(}}\textit{Second law of thermodynamics}{\rm{)}}. The entropy cannot decrease during any irreversible process, which means the entropy production $\Delta_\varepsilon$ is alway non-negative, i.e.,
	  \begin{equation}
	  	\Delta_\varepsilon=\frac{1}{\theta}\Big(\mu(\theta)|\nabla \u|^2+\big|\Delta\d+\frac{1}{\varepsilon^2}(1-|\d|^2)\d\big|^2-{\bf q}\cdot\nabla\theta\Big)\geq 0.
      \label{Delta}
	  \end{equation}
\end{Proposition}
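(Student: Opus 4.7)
The plan is to establish both items by direct computation on smooth solutions of \eqref{eqn:1.3}, exploiting the Maxwell relation \eqref{2.4}, the Clausius--Duhem relation \eqref{2.8}, and the Fourier law \eqref{heat_flux}.

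For part \textbf{(1)}, I would compute the material derivative of each summand in $e_\varepsilon^{total}=\tfrac12|\u|^2+\tfrac12|\nabla\d|^2+F_\varepsilon(\d)+\theta$ and add them up. Taking the inner product of the momentum equation with $\u$, using $\nabla\cdot\u=0$, and re-expressing $-\u\cdot\nabla P$, $\u\cdot\nabla\cdot(\mu(\theta)\nabla\u)$, and $\u\cdot\nabla\cdot(\nabla\d\odot\nabla\d)$ as divergences plus bulk terms yields
\begin{equation*}
\frac{D}{Dt}\bigl(\tfrac12|\u|^2\bigr)=-\nabla\cdot\bigl(P\u-\mu(\theta)\u\cdot\nabla\u+(\nabla\d\odot\nabla\d)\cdot\u\bigr)-\mu(\theta)|\nabla\u|^2+(\nabla\d\odot\nabla\d):\nabla\u.
\end{equation*}
For the director contribution, the crucial step is the commutator identity $\frac{D}{Dt}(\nabla\d)=\nabla\bigl(\frac{D\d}{Dt}\bigr)-(\nabla\u)\cdot\nabla\d$; pairing with $\nabla\d$, integrating by parts on the first term, and substituting the director equation $\frac{D\d}{Dt}=\Delta\d-\f_\varepsilon(\d)$ gives
\begin{equation*}
\frac{D}{Dt}\bigl(\tfrac12|\nabla\d|^2+F_\varepsilon(\d)\bigr)=\nabla\cdot\bigl((\nabla\d)^T\tfrac{D\d}{Dt}\bigr)-\bigl|\tfrac{D\d}{Dt}\bigr|^2-(\nabla\d\odot\nabla\d):\nabla\u.
\end{equation*}
The temperature equation directly supplies $\frac{D\theta}{Dt}=-\nabla\cdot\q+\mu(\theta)|\nabla\u|^2+\bigl|\frac{D\d}{Dt}\bigr|^2$. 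Summing the three identities, the pairs $\pm\mu(\theta)|\nabla\u|^2$, $\pm\bigl|\frac{D\d}{Dt}\bigr|^2$, and $\pm(\nabla\d\odot\nabla\d):\nabla\u$ cancel, and what is left reassembles into $-\nabla\cdot(\Sigma+\q)$ with $\Sigma$ as in \eqref{sigma}.

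For part \textbf{(2)}, the Maxwell relation \eqref{2.4} gives $\frac{D\eta}{Dt}=\theta^{-1}\frac{D\theta}{Dt}$. Combining this with the $\theta$-equation of \eqref{eqn:1.3} and the Clausius--Duhem relation $\g=\q/\theta$, expanding $\nabla\cdot(\q/\theta)=\theta^{-1}\nabla\cdot\q-\theta^{-2}\q\cdot\nabla\theta$, and collecting terms in \eqref{2.7} identifies $\Delta_\varepsilon$ as in \eqref{Delta}. Non-negativity is then immediate: the viscous term $\mu(\theta)|\nabla\u|^2$ and the director dissipation $\bigl|\Delta\d+\varepsilon^{-2}(1-|\d|^2)\d\bigr|^2$ are manifestly non-negative, while substituting \eqref{heat_flux} yields
\begin{equation*}
-\q\cdot\nabla\theta=k(\theta)|\nabla\theta|^2+h(\theta)(\nabla\theta\cdot\d)^2\geq 0
\end{equation*}
under the natural structural assumption $k(\theta),h(\theta)\geq 0$.

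The main bookkeeping obstacle lies in the director-energy computation: carefully generating the divergence term $\nabla\cdot\bigl((\nabla\d)^T\frac{D\d}{Dt}\bigr)$ through integration by parts, and checking that the cross-term $(\nabla\d\odot\nabla\d):\nabla\u$ produced by the non-trivial commutator $[\frac{D}{Dt},\nabla]\d=-(\nabla\u)\cdot\nabla\d$ cancels exactly against its counterpart arising from the elastic Ericksen stress in the momentum equation. Once this cancellation is in place, both statements reduce to algebraic rearrangement and standard sign considerations on the Fourier coefficients.
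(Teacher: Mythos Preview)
Your proposal is correct and follows essentially the same approach as the paper: a direct computation of $\frac{D}{Dt}e_\varepsilon^{total}$ using the commutator identity $\frac{D}{Dt}\nabla\d=\nabla\frac{D\d}{Dt}-(\nabla\u)\cdot\nabla\d$ to produce the divergence $\nabla\cdot\bigl((\nabla\d)^T\frac{D\d}{Dt}\bigr)$ and the cross-term $(\nabla\d\odot\nabla\d):\nabla\u$, followed by cancellation of all bulk terms; for part (2) both you and the paper invoke \eqref{2.4}, \eqref{2.7}, \eqref{2.8}, and \eqref{heat_flux} to identify $\Delta_\varepsilon$ and read off non-negativity. The only cosmetic difference is that you organize the computation summand-by-summand whereas the paper expands everything in a single chain.
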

\begin{proof}
 We first prove \eqref{Sigma}. By direct calculations, we have
\begin{equation}
\begin{array}{l}
\frac{D}{Dt}e_\varepsilon^{total}=\u\cdot \frac{D\u}{Dt}+ \nabla \d:\frac{D}{Dt}\nabla \d+ f_\varepsilon(\d)\cdot \frac{D\d}{Dt}+\frac{D\theta}{Dt}\\
=\u\cdot \div\left(-P I+ \mu(\theta)\nabla \u-\nabla \d\odot\nabla \d\right)
+\nabla \d:\nabla\frac{D \d}{Dt}-\nabla \d\odot \nabla \d:\nabla \u\\
\quad + \f_\varepsilon(\d)\cdot \frac{D\d}{Dt}-\nabla\cdot\q+\mu(\theta)|\nabla \u|^2+\big|\Delta\d+\frac{1}{\varepsilon^2}(1-|\d|^2)\d\big|^2\\
=\div\left( -P \u+\mu(\theta)\u\cdot\nabla \u-\nabla \d\odot \nabla \d\cdot \u \right)-\mu(\theta)|\nabla \u|^2+\nabla \d\odot \nabla \d:\nabla \u\\
\quad  +\div\big( (\nabla \d)^T \frac{D\d}{Dt} \big)-(\Delta \d-\f_\varepsilon(\d))\cdot \frac{D\d}{Dt}-\nabla \d\odot \nabla \d:\nabla \u-\nabla\cdot \q\\
\quad+\mu(\theta)|\nabla \u|^2+\big|\Delta\d+\frac{1}{\varepsilon^2}(1-|\d|^2)\d\big|^2\\
=\div\big( -P \u +\mu(\theta)\u\cdot \nabla \u-\nabla \d\odot \nabla \d\cdot \u+(\nabla \d)^T \frac{D\d}{Dt} \big)-\nabla\cdot\q\\
=-\div(\Sigma+\q).
\end{array}
\label{eqn:ApproxTotalEnergy}
\end{equation}
Note that \eqref{Delta} follows directly
from \eqref{2.4}, \eqref{2.7},  \eqref{eqn:1.3}$_4$, and \eqref{heat_flux}, i.e.
\begin{eqnarray*}
  \Delta_\varepsilon&=&\frac{1}{\theta}\left( \mu(\theta)|\nabla\u|^2+\left|\Delta \d-f_\varepsilon(\d)\right|^2 -{\bf q}\cdot\nabla\theta\right)\\
 &=& \frac{1}{\theta}\left( 
 \mu(\theta)|\nabla\u|^2+\left|\Delta \d-f_\varepsilon(\d)\right|^2 +k(\theta)|\nabla\theta|^2
 +h(\theta)|\nabla\theta\cdot{\bf d}|^2\right)\geq 0.
\label{}
\end{eqnarray*}
This completes the proof.
\end{proof}
\subsection{Non-isothermal simplified Ericksen-Leslie system}
As $\varepsilon$ tends to $0$,  due to the penalization effect of $F_\varepsilon(\d)$, formally 
the equation of $\d$  in (\ref{eqn:GinzburgLandau}) converges to
$$\partial_t \d+\u\cdot\nabla\d=\Delta \d+|\nabla\d|^2\d,$$
where $|\d|=1$. This is a ``transported gradient flow" of the Dirichlet energy 
$\frac{1}{2}\int_{\Omega}|\nabla \d|^2\,dx$ for maps $\d:\Omega\to \mathbb{S}^2$.

As in the previous section, we introduce the total energy for \eqref{eqn:1.4}: 
$$e^{total}=\frac12({|\u|^2}+{|\nabla \d|^2})+\theta,$$
and the entropy evolution equation:
  \begin{equation}
  \eta_t+\u\cdot \nabla \eta=-\nabla\cdot \g+\Delta_0,
  \label{2.13}
  \end{equation}
where $\Delta_0$ is the entropy production given by \eqref{therm2} below.

The thermal consistency of \eqref{eqn:1.4} is described by the following proposition.

\begin{Proposition}
	Suppose $(\u, \d, \theta)$ is a strong solution to \eqref{eqn:1.4}. Then
	
\noindent (1) {\rm{(}}\textit{First law of thermodynamics}{\rm{)}}.
The total energy is conservative, i.e.,
	    \begin{equation}
	      \frac{D}{Dt}e^{total}+\nabla\cdot (\Sigma+\q)=0,
	      \label{therm1}
	    \end{equation}
	    where $\Sigma=P \u -\mu(\theta)\u\cdot \nabla\u+\nabla\d\odot\nabla\d\cdot \u-(\nabla\d)^T\frac{D\d}{Dt}.$
	    
\noindent (2) {\rm{(}}\textit{Second law of thermodynamics}{\rm{)}}.
The entropy production $\Delta_0$ is non-{negative}, i.e.,
	     \begin{equation}
	     	\Delta_0=\frac{1}{\theta}
		\left(\mu(\theta) |\nabla u|^2+|\Delta{\bf d}+|\nabla{\bf d}|^2{\bf d}|^2
		-{\bf q}\cdot\nabla\theta\right)
		\geq 0.
	     	\label{therm2}
	     \end{equation}
\end{Proposition}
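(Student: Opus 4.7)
The plan is to imitate the direct computation carried out in the proof of the previous proposition for the Ginzburg--Landau approximation \eqref{eqn:GinzburgLandau}, with two main modifications tailored to the constrained system \eqref{eqn:1.4}: (i) the internal energy no longer contains the potential $F_\varepsilon(\d)$, so the term $\f_\varepsilon(\d)\cdot\frac{D\d}{Dt}$ that appeared in \eqref{eqn:ApproxTotalEnergy} disappears; (ii) the constraint $|\d|=1$ must be used, which for a strong solution forces the identity $\d\cdot\frac{D\d}{Dt}=0$, hence $\d\cdot\Delta\d=-|\nabla\d|^2$ and $(\Delta\d+|\nabla\d|^2\d)\perp\d$.

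For part (1), I would start from
$$
\frac{D}{Dt}e^{total}=\u\cdot\frac{D\u}{Dt}+\nabla\d:\frac{D}{Dt}\nabla\d+\frac{D\theta}{Dt},
$$
and treat the three pieces separately. The first piece is handled by substituting the momentum equation \eqref{eqn:1.4}$_1$ and writing $\u\cdot\mathrm{div}(-PI+\mu(\theta)\nabla\u-\nabla\d\odot\nabla\d)$ as a divergence plus the two bulk terms $-\mu(\theta)|\nabla\u|^2+\nabla\d\odot\nabla\d:\nabla\u$, exactly as in \eqref{eqn:ApproxTotalEnergy} (the pressure contribution vanishes because $\nabla\cdot\u=0$). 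The second piece is rewritten using the transport-commutator identity
$$
\nabla\d:\frac{D}{Dt}\nabla\d=\nabla\d:\nabla\frac{D\d}{Dt}-\nabla\d\odot\nabla\d:\nabla\u
=\mathrm{div}\bigl((\nabla\d)^{T}\tfrac{D\d}{Dt}\bigr)-\Delta\d\cdot\frac{D\d}{Dt}-\nabla\d\odot\nabla\d:\nabla\u.
$$
The crucial algebraic step is to replace $-\Delta\d\cdot\frac{D\d}{Dt}$ by $-(\Delta\d+|\nabla\d|^2\d)\cdot\frac{D\d}{Dt}$, which is legitimate because the added term $|\nabla\d|^2\d\cdot\frac{D\d}{Dt}$ vanishes by the constraint $|\d|=1$. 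Using \eqref{eqn:1.4}$_3$ this then equals $-|\Delta\d+|\nabla\d|^2\d|^2$, which will precisely cancel the corresponding dissipation term coming from the temperature equation \eqref{eqn:1.4}$_4$. The third piece $\frac{D\theta}{Dt}$ is substituted from \eqref{eqn:1.4}$_4$ directly. After collecting divergences and noting that $\mu(\theta)|\nabla\u|^2$ and $|\Delta\d+|\nabla\d|^2\d|^2$ cancel, what remains is exactly $-\mathrm{div}(\Sigma+\q)$ with $\Sigma$ as stated, yielding \eqref{therm1}.

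For part (2), the derivation of $\Delta_0$ is essentially a rewriting of the temperature equation in terms of entropy. Using the Maxwell relation \eqref{2.4} and the chain rule, $\frac{D\eta}{Dt}=\frac{1}{\theta}\frac{D\theta}{Dt}$; substituting the temperature equation \eqref{eqn:1.4}$_4$, dividing by $\theta$, and invoking the Clausius--Duhem relation $\q=\theta\g$ in \eqref{2.8} to identify the entropy flux, one arrives at \eqref{2.13} with
$$
\Delta_0=\frac{1}{\theta}\bigl(\mu(\theta)|\nabla\u|^2+|\Delta\d+|\nabla\d|^2\d|^2-\q\cdot\nabla\theta\bigr).
$$
Nonnegativity then follows by inserting the generalized Fourier law \eqref{heat_flux}: $-\q\cdot\nabla\theta=k(\theta)|\nabla\theta|^2+h(\theta)|\nabla\theta\cdot\d|^2\ge 0$ provided $k(\theta),h(\theta)\ge 0$, and the first two terms are manifestly nonnegative.

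The only genuinely delicate point compared to the Ginzburg--Landau case is the use of the sphere constraint to close the cancellation in the second piece of the computation; everything else is bookkeeping of divergences and application of the generalized Fourier and Clausius--Duhem relations, and this is the step I would execute most carefully.
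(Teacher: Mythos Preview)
Your proposal is correct and follows essentially the same approach as the paper's own proof: a direct computation of $\frac{D}{Dt}e^{total}$ using the transport-commutator identity for $\nabla\d$, the sphere constraint $|\d|=1$ to effect the cancellation with the dissipation term from \eqref{eqn:1.4}$_4$, and the Fourier/Clausius--Duhem relations for part (2). The only cosmetic difference is that the paper first substitutes $\frac{D\d}{Dt}=\Delta\d+|\nabla\d|^2\d$ and then invokes the identity $(\Delta\d+|\nabla\d|^2\d)\cdot\Delta\d=|\Delta\d+|\nabla\d|^2\d|^2$, whereas you first add the vanishing term $|\nabla\d|^2\d\cdot\frac{D\d}{Dt}$ and then substitute; these are the same algebra in a different order.
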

\begin{proof}
	From \eqref{eqn:1.4}, we can compute
 \begin{equation*}
 \begin{array}{l}
   \frac{De^{total}}{Dt}=\frac{D}{Dt}\big( \frac12({|\u|^2}+{|\nabla \d|^2})+\theta \big)\\
   =\u\cdot \frac{D\u}{Dt}+\nabla\d:\frac{D}{Dt}\nabla \d+\frac{D\theta}{Dt}\\
   =\u\cdot \div\left( -PI+\mu(\theta)\nabla \u-\nabla \d\odot \nabla \d \right)\\
   \quad +\nabla \d:\nabla \frac{D\d}{Dt}-\nabla \d\odot \nabla \d:\nabla \u-\nabla\cdot \q+\mu(\theta)|\nabla \u|^2+\left|\Delta \d+|\nabla \d|^2 \d\right|^2\\
   =\div\left( -P\u+\mu(\theta)\u\cdot \nabla \u-\nabla \odot \nabla \d\cdot \u \right)-\mu(\theta)|\nabla \u|^2+\nabla \d\odot \nabla \d:\nabla \u\\
   \quad +\div\big( (\nabla \d)^T\frac{D\d}{Dt} \big)-(\Delta \d+|\nabla \d|^2 \d)\cdot \Delta \d-\nabla \d\odot \nabla \d:\nabla \u\\
   \quad -\div \q+\mu(\theta)|\nabla \u|^2+|\Delta \d+|\nabla \d|^2 \d|^2
   \\
   =-\div (\Sigma+\q),
   \end{array}
 \end{equation*}
where we have used the fact $|\d|=1$ so that
$$(\Delta \d+|\nabla \d|^2 \d)\cdot \Delta \d=|\Delta \d+|\nabla \d|^2 \d|^2.$$
This implies \eqref{therm1}. From
the entropy equation \eqref{2.13}, Clausius-Duhem's relation \eqref{2.8},
the temperature equation in \eqref{eqn:1.4}, and \eqref{heat_flux},
we can show
\begin{eqnarray*}
\Delta_0&=&\frac{1}{\theta}\big( \mu(\theta)|\nabla\u|^2+\left|\Delta \d+|\nabla\d|^2\d\right|^2-{\bf q}\cdot\nabla\theta \big)\\
&=&\frac{1}{\theta}\big( \mu(\theta)|\nabla\u|^2+\left|\Delta \d+|\nabla\d|^2\d\right|^2
+k(\theta)|\nabla\theta|^2+h(\theta)|\nabla\theta\cdot{\bf d}|^2 \big)\ge 0.
\label{}
\end{eqnarray*}
This yields \eqref{therm2}.
\end{proof}

 \section{Weak formulation for Ericksen-Leslie system \eqref{eqn:1.4}}
Throughout this paper, we will assume
that $\mu$ is a continuous function,
 and $h, k$ are Lipschitz continuous functions, and 
 \begin{equation}
 0<\underline{\mu}\leq \mu(\theta)\leq \overline{\mu}, \quad 0<\underline{k}\leq k(\theta), h(\theta)\leq \overline{k} \quad \text{ for all }\theta>0,
 \label{eqn:coeffbound}
 \end{equation}
where $\underline{\mu}$, $\overline{\mu}$, $\underline{k}$, and $\overline{k}$ are positive
constants.
  We will impose the homogeneous boundary condition for $\u$:
 \begin{equation}
 \u|_{\partial \Omega}=0,\quad \frac{\partial\d}{\partial\nu}\big|_{\partial \Omega}=0,
 \label{eqn:Boundaryud}
 \end{equation}
 where $\nu$ is the outward unit normal vector field of $\partial\Omega$. 
 It is readily seen that
 \eqref{eqn:Boundaryud} implies that for $\Sigma$ given by \eqref{sigma}, it holds
 \begin{equation}
   \Sigma\cdot\nu|_{\partial \Omega}=0.
   \label{eqn:Boundaryud1}
 \end{equation}
 We will also impose the non-flux boundary condition for the temperature function so that
 the heat flux $\q$ satisfies
 \begin{equation}
 \q\cdot \nu|_{\partial \Omega}=0.
 \label{eqn:Boundaryq}
 \end{equation}
 Set
 $$\mathbf{H}=\text{ Closure of $C_0^\infty(\Omega;\R^3)\cap \left\{ v:\nabla\cdot v=0 \right\}$ in $L^2(\Omega;\R^3)$},$$
 $$\mathbf{J}=\text{ Closure of $C_0^\infty(\Omega;\R^3)\cap \left\{ v:\nabla\cdot v=0 \right\}$ in $H^1(\Omega;\R^3)$},$$
 and
 $$H^1(\Omega, \mathbb{S}^2)=\left\{ \d\in H^1(\Omega,\R^3):\d(x)\in \mathbb{S}^2 \ a.e. \  x\in \Omega \right\}.$$
There is some difference between the weak formulation of non-isothermal systems \eqref{eqn:1.3} or \eqref{eqn:1.4} and that of the isothermal system \eqref{eqn:1.1} or \eqref{eqn:1.2}. 
For example, an important feature of a weak solution to \eqref{eqn:1.1} is the law
of energy dissipation
\begin{equation}
  \frac{d}{dt}\int_{\Omega}\left(|\u|^2+|\nabla\d|^2\right) dx=-2\int_{\Omega}\left( \mu|\nabla\u|^2+|\Delta \d-f_\varepsilon(\d)|^2 \right)dx\leq 0,
  \label{3.5}
\end{equation}
or 
\begin{equation}
  \frac{d}{dt}\int_{\Omega}\left( |\u|^2+|\nabla\d|^2 \right)dx=-2\int_{\Omega}\left( \mu|\nabla\u|^2+|\Delta\d+|\nabla\d|^2\d|^2 \right)dx\leq 0
  \label{3.6}
\end{equation}
for \eqref{eqn:1.2}. 

In contrast with \eqref{3.5} and \eqref{3.6}, we need to include a weak formulation both the \textit{first law of thermodynamics} \eqref{therm1} and the \textit{second law of thermodynamics} \eqref{therm2} into 
\eqref{eqn:1.3} or\eqref{eqn:1.4}. Namely,
the entropy inequality for the temperature equation in  (\ref{eqn:1.3}):
 \begin{eqnarray}
 && \partial_t H(\theta)+\u\cdot \nabla H(\theta)\nonumber\\
 &&\geq-\div(H'(\theta)\q)+H'(\theta)\left( \mu(\theta)|\nabla \u|^2+|\Delta \d-f_\varepsilon(\d)|^2 \right)+H''(\theta)\q\cdot \nabla \theta,
   \label{eqn:RelaxedEntropy}
 \end{eqnarray}
or in \eqref{eqn:1.4}:
\begin{eqnarray}
&&  \partial_t H(\theta)+\u\cdot \nabla H(\theta)\nonumber\\
&&\geq -\div( H'(\theta)\q )+H'(\theta)\left( \mu(\theta)|\nabla \u|^2+|\Delta \d+|\nabla \d|^2 \d|^2 \right)+H''(\theta)\q\cdot \nabla \theta,
  \label{eqn:Entropy}
\end{eqnarray}
 where $H$ is any smooth, non-decreasing and concave function.
More precisely, we have the following weak formulation to the non-isothermal system \eqref{eqn:1.4}.
 \begin{Definition}
   For $0<T< \infty$, a triple $(\u, \d, \theta)$ is a weak solution to  (\ref{eqn:1.4}), (\ref{eqn:Entropy}) if the following properties hold: 
   \begin{itemize} 
  \item [i)]  $\u\in L^\infty([0, T], \H)\cap L^2([0,T], {\bf J})$, 
  $\d\in L^2([0,T], H^1(\Omega,\mathbb S^2))$, $\theta\in L^\infty([0,T], L^1(\Omega)).$
   
 \item [ii)] For any $\varphi\in C_0^\infty(\overline{\Omega}\times [0,T),
 \R^3)$, with $\nabla\cdot\varphi=0$ and
 $\varphi\cdot \nu|_{\partial \Omega}=0$, $\psi_1\in C^\infty_0(\overline{\Omega}\times [0,T),
 \R^3)$, 
 and $\psi_2\in C^\infty(\bar{\Omega}\times [0,T))$ with $\psi_2\geq0$, it holds
\begin{eqnarray}
&&\int_{0}^{T}\int_{\Omega}\left( \u\cdot \partial_t \varphi+ \u\otimes \u:\nabla \varphi\right)\nonumber\\
&&=\int_{0}^{T}\int_{\Omega}(\mu(\theta)\nabla \u-\nabla \d\odot \nabla \d):\nabla \varphi-\int_{\Omega}\u_0\cdot \varphi(\cdot , 0),
       \label{eqn:weaku}
     \end{eqnarray}
     \begin{eqnarray}
      && \int_{0}^{T}\int_{\Omega}(\d\cdot \partial_t \psi_1+\u\otimes \d:\nabla \psi_1)\nonumber\\
       &&=\int_{0}^{T}\int_{\Omega}(\nabla \d:\nabla \psi_1-|\nabla \d|^2 \d\cdot \psi_1)-\int_{\Omega}\d_0\cdot \psi_1(\cdot , 0),
       \label{weakd}
     \end{eqnarray}

     \begin{eqnarray}
      &&\int_{0}^{T}\int_{\Omega}H(\theta)\partial_t \psi_2+\left( H(\theta)\u-H'(\theta)\q \right)\cdot  \nabla \psi_2\nonumber\\
       &&\leq -\int_{0}^{T}\int_{\Omega}\left[H'(\theta)\left( \mu(\theta)|\nabla \u|^2+|\Delta \d+|\nabla \d|^2 \d|^2 \right)-H''(\theta)\q\cdot \nabla \theta\right]\psi_2\nonumber\\
       &&\ \ \ -\int_{\Omega}H(\theta_0)\psi_2{(\cdot , 0)},\label{weakEnt}
     \end{eqnarray}
     for any smooth, non-decreasing and concave function $H$.
\item [iii)]  The following the energy inequality \eqref{therm1}
\begin{equation}
      \int_\Omega\big(\frac12({|\u|^2}+{|\nabla\d|^2})+\theta\big)(\cdot,
      t)\leq \int_{\Omega}\big(\frac12({|\u_0|}+{|\nabla\d_0|^2})+\theta_0\big)
        \label{weaktotal}
     \end{equation}
   holds  for a.e. $t\in [0,T)$.
     \item [iv)] The initial condition $\u(\cdot,0)=\u_0$, $\d(\cdot,0)=\d_0$, $\theta(\cdot,0)=\theta_0$
     holds in the weak sense.
\end{itemize}
 \end{Definition}

Now we state our main result of this paper, which is the following existence theorem 
of global weak solutions to \eqref{eqn:1.4}. 
\begin{Theorem}\label{main}
  For any $T>0,  \u_0\in \mathbf{H}$, $\d_0\in H^1(\Omega, \mathbb{S}^2)$ and $\theta_0\in L^1(\Omega)$, 
  if $ \d_0(\Omega)\subset \mathbb{S}_+^2$ and  $\essinf_\Omega\theta_0>0$, then 
  there exists a global weak solution $(\u, \d, \theta)$ to (\ref{eqn:1.4}), (\ref{eqn:Entropy}), subject to the initial condition $(\u, \d, \theta)=(\u_0, \d_0, \theta_0)$ and
the boundary condition (\ref{eqn:Boundaryud}) and (\ref{eqn:Boundaryq}) such that
\begin{enumerate}
     \item $\u\in L_t^\infty L_x^2\cap L_t^2 H_x^1$,
     \item $\d\in L_t^\infty H_x^1
     (\Omega, \mathbb{S}^2)$, and $\d(x, t)\in \mathbb{S}^2_+$ a.e. in $\Omega\times (0, T)$,
     \item $\theta\in L_t^\infty L_x^1\cap L_t^p W^{1, p}_x$ for $1\leq p<5/4$, $\theta{\geq\essinf_{\Omega}\theta_0}$ a.e. in $\Omega\times (0, T)$.
   \end{enumerate}
  \label{thm:WeakErisksenLesile}
\end{Theorem}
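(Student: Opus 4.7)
The overall plan is to construct the desired weak solution as a limit, as $\varepsilon\to 0$, of weak solutions $(\u_\varepsilon,\d_\varepsilon,\theta_\varepsilon)$ to the non-isothermal Ginzburg-Landau approximation \eqref{eqn:GinzburgLandau}, whose existence for every fixed $\varepsilon>0$ is supplied by the Faedo-Galerkin scheme of Appendix~A. The hemisphere hypothesis $\d_0(\Omega)\subset\mathbb{S}^2_+$ is the crucial structural assumption that permits us to close the supercritical elastic term $\nabla\cdot(\nabla\d\odot\nabla\d)$ after the limit.

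I first derive uniform-in-$\varepsilon$ a priori estimates on $(\u_\varepsilon,\d_\varepsilon,\theta_\varepsilon)$. Testing the first three equations of \eqref{eqn:GinzburgLandau} with $(\u_\varepsilon,-\Delta\d_\varepsilon+\f_\varepsilon(\d_\varepsilon))$ and noting that the Ericksen-stress and transport terms cancel, one obtains the ``isothermal'' energy identity
\begin{equation*}
\tfrac{d}{dt}\!\int_\Omega\Bigl(\tfrac12|\u_\varepsilon|^2+\tfrac12|\nabla\d_\varepsilon|^2+F_\varepsilon(\d_\varepsilon)\Bigr)+\int_\Omega\Bigl(\mu(\theta_\varepsilon)|\nabla\u_\varepsilon|^2+|\Delta\d_\varepsilon-\f_\varepsilon(\d_\varepsilon)|^2\Bigr)=0,
\end{equation*}
which, combined with \eqref{eqn:coeffbound}, controls $\u_\varepsilon$ in $L^\infty_tL^2_x\cap L^2_tH^1_x$, $\d_\varepsilon$ in $L^\infty_tH^1_x$, and $F_\varepsilon(\d_\varepsilon)$ in $L^\infty_tL^1_x$, together with $|\Delta\d_\varepsilon-\f_\varepsilon(\d_\varepsilon)|$ in $L^2_tL^2_x$. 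Integrating the temperature equation in \eqref{eqn:1.3} yields $\|\theta_\varepsilon\|_{L^\infty_tL^1_x}\le C$. The weak maximum principle of Section~4 applied to the free-drifted Ginzburg-Landau flow gives $|\d_\varepsilon|\le 1$ and $d_\varepsilon^3\ge 0$ pointwise, so $\d_\varepsilon\in\mathbb{S}^2_+$ on $\Omega\times(0,T)$; a parallel maximum principle for the temperature equation, whose source $\mu(\theta_\varepsilon)|\nabla\u_\varepsilon|^2+|\Delta\d_\varepsilon-\f_\varepsilon(\d_\varepsilon)|^2$ is non-negative and whose heat flux satisfies \eqref{heat_flux} and \eqref{eqn:Boundaryq}, yields $\theta_\varepsilon\ge\essinf_\Omega\theta_0>0$. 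Finally, a Boccardo-Gallou\"et estimate applied to this equation, whose source is bounded in $L^1_{t,x}$, produces $\theta_\varepsilon\in L^p_tW^{1,p}_x$ for every $p<5/4$.

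Weak-$*$ compactness and the Aubin-Lions lemma (with dual-space time regularity coming from the PDEs themselves) yield subsequences with $\u_\varepsilon\rightharpoonup\u$ weakly in $L^2_tH^1_x$ and strongly in $L^2_tL^q_x$ for $q<6$, $\d_\varepsilon\to\d$ strongly in $L^2_tL^2_x$ with $\nabla\d_\varepsilon\rightharpoonup\nabla\d$ weakly in $L^2_tL^2_x$, and $\theta_\varepsilon\to\theta$ a.e.\ and weakly in $L^p_tW^{1,p}_x$. The penalty control $\varepsilon^{-2}\int(|\d_\varepsilon|^2-1)^2\le C$ forces $|\d|=1$ a.e., while the pointwise constraints $d_\varepsilon^3\ge 0$ and $\theta_\varepsilon\ge\essinf_\Omega\theta_0$ pass directly to the limit. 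The main obstacle, and the technical heart of the theorem, is to upgrade the weak convergence of $\nabla\d_\varepsilon$ to strong convergence in $L^2_tL^2_x$: this is precisely what is needed to identify $\varepsilon^{-2}(1-|\d_\varepsilon|^2)\d_\varepsilon$ with $|\nabla\d|^2\d$ in \eqref{weakd}, to close the elastic stress $\nabla\d_\varepsilon\odot\nabla\d_\varepsilon$ in \eqref{eqn:weaku}, and to retain $|\Delta\d+|\nabla\d|^2\d|^2$ on the right-hand side of \eqref{weakEnt} after passage to the limit. Following Lin-Wang \cite{LW2}, I plan to exploit the hemisphere constraint by parametrising $\d_\varepsilon$ in a chart centred at the north pole, which renders the supercritical nonlinearity subcritical, and then to run a Struwe/Sacks-Uhlenbeck blow-up analysis: since every non-constant harmonic $\mathbb{S}^2\to\mathbb{S}^2$ map with image in the closed upper hemisphere must be constant, no energy can bubble off, no defect measure can form, and strong $H^1$ compactness follows.

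With strong $H^1$ compactness of $\d_\varepsilon$ in hand, passing to the limit in \eqref{eqn:weaku} and \eqref{weakd} is routine; the temperature-dependent viscosity is handled by the continuity of $\mu$, the a.e.\ convergence of $\theta_\varepsilon$, and Vitali's theorem applied to $\mu(\theta_\varepsilon)\nabla\u_\varepsilon$. For the entropy inequality \eqref{weakEnt}, the requirement $\psi_2\ge 0$ together with $H$ non-decreasing and concave is decisive: the integrand $H'(\theta_\varepsilon)\bigl(\mu(\theta_\varepsilon)|\nabla\u_\varepsilon|^2+|\Delta\d_\varepsilon-\f_\varepsilon(\d_\varepsilon)|^2\bigr)\psi_2$ is non-negative and lower semicontinuous under the modes of convergence just established, so Fatou's lemma preserves the inequality; the Lipschitz dependence of $k$ and $h$ on $\theta$, the a.e.\ convergence of $\theta_\varepsilon$, and the weak convergence of $\nabla\theta_\varepsilon$ in $L^p_{t,x}$ handle the entropy-flux term $H'(\theta_\varepsilon)\q_\varepsilon$. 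The global energy inequality \eqref{weaktotal} then follows from the approximate energy identity and lower semicontinuity, noting that $F_\varepsilon(\d_\varepsilon)\to 0$ in $L^1$ because $|\d|=1$ a.e. in the limit, which completes the construction.
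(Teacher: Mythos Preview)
Your proposal is correct and follows the same route as the paper: build $(\u_\varepsilon,\d_\varepsilon,\theta_\varepsilon)$ by the Faedo--Galerkin/Ginzburg--Landau approximation, derive the uniform energy, maximum-principle, and Boccardo--Gallou\"et-type $L^p_tW^{1,p}_x$ bounds, extract weak limits by Aubin--Lions, upgrade $\nabla\d_\varepsilon\to\nabla\d$ to strong $L^2$ convergence via the hemisphere-based compactness of \cite{LW2}, and pass to the limit in \eqref{eqn:weaku}--\eqref{weaktotal} using lower semicontinuity for \eqref{weakEnt}. One clarification: the compactness in \cite{LW2} is not obtained by a chart parametrisation rendering the problem subcritical (indeed $\d_\varepsilon$ is $\overline{B^3}$-valued rather than $\mathbb{S}^2$-valued, so no such chart is available), but precisely by the small-energy-regularity plus no-bubble mechanism you also describe; the paper then transfers this \emph{spatial} compactness to the spacetime setting by decomposing $[0,T]$ into good and bad time slices according to the size of $\int_\Omega|\Delta\d_\varepsilon-\f_\varepsilon(\d_\varepsilon)|^2(t)$.
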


The proof of Theorem \ref{main} is given in the sections below.

\section{Maximum principle with homogeneous Neumann boundary {conditions}}

In this section, we will sketch two a priori estimates for a drifted Ginzburg-Landau heat flow 
under the homogeneous Neumann boundary condition, which is
similar to \cite{LW2} where the {Dirichlet} boundary condition
is considered. 
More precisely,  for $\varepsilon>0$, we consider
\begin{equation}
  \left\{
    \begin{array}{ll}
      \partial_t\d_\varepsilon+\ww\cdot \nabla\d_\varepsilon=\Delta \d_\varepsilon+\frac{1}{\varepsilon^2}\left( 1-|\d_\varepsilon|^2 \right)\d_\varepsilon &\text{ in }\Omega\times(0, T), \\
      \nabla \cdot \ww=0 &\text{ in }\Omega\times(0, T), \\
      \d_\varepsilon(x, 0)=\d_0(x) &\text{ on }\Omega, \\
      \ww=\frac{\partial\d_\varepsilon}{\partial\nu}=0 & \text{ on }\partial\Omega\times(0, T).
    \end{array}
    \right.
    \label{eqn:Neumann}
  \end{equation}
  Then we have
  \begin{Lemma}
    For $0<T\le\infty$, assume $\ww\in L^2([0,T], \mathbf{J})$ and $\d_0\in H^1(\Omega, \mathbb{S}^2)$. Suppose $\d_\varepsilon\in L^2([0,T]; H^1(\Omega, \R^3))$ solves \eqref{eqn:Neumann}. Then
    \begin{equation}
    	|\d_\varepsilon(x, t)|\leq 1 \text{ a.e. } (x,t)\in\Omega\times[0,T].
    	\label{eqn:MAX1}
    \end{equation}
    \label{lemma:MAX1}
  \end{Lemma}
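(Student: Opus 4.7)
The plan is to test the equation satisfied by the scalar quantity $\phi := |\d_\varepsilon|^2 - 1$ against its positive part $\phi_+$ and show that $\int_\Omega \phi_+^2$ is non-increasing in time. Since $\d_0\in H^1(\Omega,\mathbb S^2)$ means $|\d_0|=1$ a.e., so $\phi_+(\cdot,0)=0$, this will force $\phi_+\equiv 0$.

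First I would take the dot product of the equation in \eqref{eqn:Neumann} with $\d_\varepsilon$. Using $\d_\varepsilon\cdot\Delta\d_\varepsilon=\tfrac12\Delta|\d_\varepsilon|^2-|\nabla\d_\varepsilon|^2$, this yields
\begin{equation*}
\partial_t\phi + \ww\cdot\nabla\phi - \Delta\phi = -2|\nabla\d_\varepsilon|^2 - \frac{2}{\varepsilon^2}\phi(1+\phi).
\end{equation*}
Multiplying by $\phi_+$ and integrating over $\Omega$, I would exploit three structural cancellations. For the transport term, since $\nabla\cdot\ww=0$ and $\ww=0$ on $\partial\Omega$, we have $\int_\Omega(\ww\cdot\nabla\phi)\phi_+ = \tfrac12\int_\Omega\ww\cdot\nabla(\phi_+^2)=0$. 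For the diffusion term, the boundary flux $\tfrac{\partial\phi}{\partial\nu}=2\d_\varepsilon\cdot\tfrac{\partial\d_\varepsilon}{\partial\nu}$ vanishes by the Neumann condition, so $-\int_\Omega\Delta\phi\cdot\phi_+=\int_\Omega|\nabla\phi_+|^2\geq 0$. Finally, on the set $\{\phi>0\}$ one has $\phi(1+\phi)\phi_+=\phi_+^2+\phi_+^3\geq 0$, and $-2|\nabla\d_\varepsilon|^2\phi_+\leq 0$. Combining everything gives
\begin{equation*}
\frac12\frac{d}{dt}\int_\Omega\phi_+^2 + \int_\Omega|\nabla\phi_+|^2 + \frac{2}{\varepsilon^2}\int_\Omega(\phi_+^2+\phi_+^3)\leq 0,
\end{equation*}
from which $\int_\Omega\phi_+^2(\cdot,t)\leq\int_\Omega\phi_+^2(\cdot,0)=0$ follows, proving $|\d_\varepsilon|\leq 1$ a.e.

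The main technical obstacle is justifying the chain rule for $\phi_+$ at the regularity level $\d_\varepsilon\in L^2([0,T],H^1)$. The pointwise computation above is formal, and one must check that $t\mapsto\int_\Omega\phi_+^2$ is absolutely continuous with the claimed derivative. My preferred way around this is a two-step regularization: first apply a Steklov-type time mollification to $\d_\varepsilon$ so that $\partial_t\d_\varepsilon$ is classical, and then replace the non-smooth map $s\mapsto s_+$ by a convex $C^1$ approximation $G_\delta(s)$ (e.g.\ $G_\delta(s)=(s^2+\delta^2)^{1/2}-\delta$ restricted to $s>0$) to which the standard chain rule applies. One derives the differential inequality for $\int_\Omega G_\delta(\phi)\,dx$, then passes $\delta\to 0$ using Stampacchia's truncation lemma ($\nabla\phi_+=\mathbf 1_{\{\phi>0\}}\nabla\phi$ a.e.) and dominated convergence, and finally removes the mollification using the energy bound $\ww\in L^2\mathbf J$ together with the $H^1$-regularity of $\d_\varepsilon$. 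Beyond this regularization bookkeeping, every step is a direct consequence of the sign structure of the reaction term $\tfrac{1}{\varepsilon^2}(1-|\d_\varepsilon|^2)\d_\varepsilon$, which is what makes $|\d|=1$ an invariant constraint in the limit $\varepsilon\to 0$.
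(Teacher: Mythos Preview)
Your proposal is correct and follows essentially the same approach as the paper: set $v^\varepsilon=(|\d_\varepsilon|^2-1)_+$ (your $\phi_+$), derive the parabolic inequality it satisfies from the equation for $|\d_\varepsilon|^2$, test against $v^\varepsilon$ itself, and use $\nabla\cdot\ww=0$ together with the Neumann/Dirichlet boundary conditions to conclude $\int_\Omega (v^\varepsilon)^2=0$. The paper carries out exactly this energy argument in a more compressed form, writing the differential inequality for $v^\varepsilon$ directly and dropping the sign-favorable reaction and gradient terms immediately; your version keeps those terms explicit and, in addition, spells out the Steklov/Stampacchia regularization needed to justify the chain rule for $s\mapsto s_+$, a point the paper leaves implicit.
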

  \begin{proof}
    Set 
    $$v^\varepsilon=(|\d_\varepsilon|^2-1)_+=\begin{cases}
    |\d_\varepsilon|^2-1 & if\ |\d_\varepsilon|\ge 1,\\
    0 & if\ |\d_\varepsilon|<1.
    \end{cases}$$    
  Then $v^\varepsilon$ is a weak solution to
    \begin{equation}
      \left\{
	\begin{array}{ll}
	  \partial_t v^\varepsilon+\ww\cdot \nabla v^\varepsilon=\Delta v^\varepsilon-2\big( |\nabla\d_\varepsilon|^2+\frac{1}{\varepsilon^2}
	  v^\varepsilon|\d_\varepsilon|^2 \big)\leq \Delta v^\varepsilon
	   & \text{in }\Omega\times  (0,T), \\
	  \nabla\cdot \ww=0 & \text{in }\Omega\times(0,T), \\
	  v^\varepsilon(x, 0)=0 & \text{on }\Omega, \\
	  \ww=\frac{\partial v^\varepsilon}{\partial\nu}=0 & \text{on }\partial \Omega\times (0,T).
	\end{array}
	\right.
	\label{eqn:4.1}
      \end{equation}
Multiplying \eqref{eqn:4.1}$_1$ by $v^\varepsilon$ 
and integrating it over $\Omega\times [0,\tau]$ for any $0<\tau\leq T$,
we get
      $$\int_{\Omega}|v^\varepsilon(\tau)|^2+2\int_{0}^{\tau}\int_{\Omega}|\nabla v^\varepsilon|^2\leq -\int_{0}^{\tau}\int_{\Omega}\ww\cdot \nabla((v^\varepsilon)^2)=0. $$
      Thus $v^\varepsilon=0$ a.e. in $\Omega\times[0,T]$ and \eqref{eqn:MAX1} holds.
  \end{proof}

  \begin{Lemma}
For $0<T\le\infty$, assume $\ww\in L^2([0,T]; \mathbf{J})$ and $\d_0\in H^1(\Omega; \mathbb{S}^2)$, with $\d_0(x)\in \mathbb{S}^2_+$ a.e $x\in\Omega$. If $\d_\varepsilon\in L^2([0,T]; H^1(\Omega;\R^3))$ solves \eqref{eqn:Neumann}, then
    \begin{equation}
    	\d_\varepsilon^{3}(x,t)\ge 0 {\text{ a.e.}} \ (x,t)\in\Omega\times[0,T].
    	\label{eqn:MAX2}
    \end{equation}
    \label{lemma:MAX2}
  \end{Lemma}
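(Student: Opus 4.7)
My plan is to mimic the proof of Lemma \ref{lemma:MAX1}, working with the scalar equation satisfied by the third component $\d_\varepsilon^3$ and testing against its negative part, while using the pointwise bound $|\d_\varepsilon|\le 1$ already established in Lemma \ref{lemma:MAX1} together with the hypothesis $\d_0(\Omega)\subset\mathbb{S}^2_+$ (so $\d_0^3\ge 0$).

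Concretely, reading off the third component of \eqref{eqn:Neumann} gives the scalar drifted Ginzburg--Landau equation
\begin{equation*}
\partial_t \d_\varepsilon^3+\ww\cdot\nabla \d_\varepsilon^3=\Delta \d_\varepsilon^3+\frac{1}{\varepsilon^2}\bigl(1-|\d_\varepsilon|^2\bigr)\d_\varepsilon^3 \quad\text{in }\Omega\times(0,T),
\end{equation*}
together with $\partial_\nu \d_\varepsilon^3=0$ on $\partial\Omega$ and $\d_\varepsilon^3(x,0)=\d_0^3(x)\ge 0$. Setting $w^\varepsilon:=(\d_\varepsilon^3)_-=\max(-\d_\varepsilon^3,0)\ge 0$, I would test this equation against $-w^\varepsilon$ and integrate over $\Omega\times[0,\tau]$ for $0<\tau\le T$. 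The time-derivative term yields $\tfrac12\int_\Omega (w^\varepsilon(\tau))^2$ since $w^\varepsilon(\cdot,0)\equiv 0$; the transport term $-\int \ww\cdot\nabla \d_\varepsilon^3\, w^\varepsilon=\tfrac12\int \ww\cdot\nabla(w^\varepsilon)^2$ vanishes by $\nabla\cdot\ww=0$ and $\ww|_{\partial\Omega}=0$; the Laplacian, after integration by parts (Neumann boundary term vanishes), produces $\int|\nabla w^\varepsilon|^2$.

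The only non-absorbing contribution is the reaction term, which gives
\begin{equation*}
-\frac{1}{\varepsilon^2}\int_0^\tau\!\!\int_\Omega \bigl(1-|\d_\varepsilon|^2\bigr)\d_\varepsilon^3\, w^\varepsilon
=\frac{1}{\varepsilon^2}\int_0^\tau\!\!\int_\Omega \bigl(1-|\d_\varepsilon|^2\bigr)(w^\varepsilon)^2,
\end{equation*}
using that $\d_\varepsilon^3\cdot(-w^\varepsilon)=(w^\varepsilon)^2$. By Lemma \ref{lemma:MAX1} we have $0\le 1-|\d_\varepsilon|^2\le 1$ a.e., so this term is bounded by $\frac{1}{\varepsilon^2}\int_0^\tau\int_\Omega (w^\varepsilon)^2$. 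Combining yields
\begin{equation*}
\frac12\int_\Omega (w^\varepsilon(\tau))^2+\int_0^\tau\!\!\int_\Omega|\nabla w^\varepsilon|^2\le \frac{1}{\varepsilon^2}\int_0^\tau\!\!\int_\Omega (w^\varepsilon)^2,
\end{equation*}
and Gronwall's inequality forces $w^\varepsilon\equiv 0$, i.e.\ $\d_\varepsilon^3\ge 0$ a.e.

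The only mildly delicate point is that the reaction term has the ``wrong sign'' for a direct comparison argument: it is not dissipative and in fact acts as a source for $w^\varepsilon$. The resolution is that Lemma \ref{lemma:MAX1} has already provided an $L^\infty$ bound on the coefficient $(1-|\d_\varepsilon|^2)$, so the source is linear in $(w^\varepsilon)^2$ with bounded coefficient $1/\varepsilon^2$, and Gronwall applies cleanly given the vanishing initial datum. Everything else (admissibility of $w^\varepsilon$ as a test function, the chain rule for $(\cdot)_-$ on $L^2_tH^1_x$ maps, and handling the Neumann boundary term) is standard for $H^1$ weak solutions and requires no new ingredient beyond what was used in the proof of Lemma \ref{lemma:MAX1}.
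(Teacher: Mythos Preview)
Your proof is correct but takes a slightly different route from the paper. The paper introduces the exponentially weighted function $\varphi_\varepsilon=\max\{-e^{-t/\varepsilon^2}\d_\varepsilon^3,0\}$; the extra factor $e^{-t/\varepsilon^2}$ shifts the reaction coefficient to $\alpha_\varepsilon=\frac{1}{\varepsilon^2}(1-|\d_\varepsilon|^2)-\frac{1}{\varepsilon^2}=-\frac{|\d_\varepsilon|^2}{\varepsilon^2}\le 0$, so multiplying by $\varphi_\varepsilon$ and integrating yields $\int_\Omega|\varphi_\varepsilon(\tau)|^2\le 0$ directly, with no Gronwall step needed. Your approach instead leaves the reaction term as a nonnegative source, bounds its coefficient by $1/\varepsilon^2$, and closes with Gronwall. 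Both arguments are equally short and neither actually requires Lemma \ref{lemma:MAX1} (only the trivial bound $1-|\d_\varepsilon|^2\le 1$ is used); the paper's weighting trick buys a one-line energy inequality, while your version is perhaps more transparent and avoids introducing the auxiliary exponential factor.
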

\begin{proof}
Set $\varphi_\varepsilon(x,t)=\max\{-e^{-\frac{t}{\varepsilon^2}}{\d^{3}}_\varepsilon(x, t),0\}$. Then
\begin{equation}
\left\{
\begin{array}{ll}
\partial_t \varphi_\varepsilon+\ww\cdot \nabla\varphi_\varepsilon-
\Delta \varphi_\varepsilon=\alpha_\varepsilon\varphi_\varepsilon, &\text{ in }\Omega\times(0,T), \\
\nabla\cdot \ww=0, &\text{ in }\Omega\times(0,T), \\
\varphi_\varepsilon(x, 0)=0, & \text{ on }\Omega, \\
\ww=\frac{\partial\varphi_\varepsilon}{\partial\nu}=0, &\text{ on }\partial\Omega\times(0,T),
\end{array}
\right.
\label{eqn:MAX22}
\end{equation}
where
$$\alpha_\varepsilon(x, t)=\frac{1}{\varepsilon^2}(1-|\d_\varepsilon(x,t)|^2)-\frac{1}{\varepsilon^2}\leq 0\ {\text{ a.e. in }}\Omega\times[0,T].$$
Multiplying \eqref{eqn:MAX22}$_1$ by $\varphi_\varepsilon$
and integrating over $\Omega\times[0, \tau]$ for $0<\tau\leq T$, we obtain
\begin{align*}
  \int_{\Omega}|\varphi_\varepsilon|^2(\tau)+2\int_{0}^{\tau}\int_{\Omega}
  |\nabla \varphi_\varepsilon|^2&=-\int_{0}^{\tau}\int_{\Omega}\ww\cdot \nabla (\varphi_\varepsilon^2)
  +2\int_{0}^{\tau}\int_{\Omega}\alpha_\varepsilon|\varphi_\varepsilon|^2\\
  &=2\int_{0}^{\tau}\int_{\Omega}\alpha_\varepsilon|\varphi_\varepsilon|^2\leq 0.
\end{align*}
Thus $\varphi_\varepsilon=0$ a.e. {in} $\Omega\times[0,T]$ and \eqref{eqn:MAX2} holds.
\end{proof}


Finally we need the following minimum principle for the temperature which guarantees 
the positive lower bound of $\theta$.
\begin{Lemma}\label{theta-est}
  For $0<T\le\infty$, assume $\ww\in L^2(0,T; \mathbf{J})$, $\theta_0\in L^1(\Omega)$ with $\essinf_{\Omega}\theta_0>0$, and $\d_\varepsilon\in  L^2([0,T]; H^1
  (\Omega,\mathbb R^3))$. If $\theta_\varepsilon\in L_t^\infty(0,T; L^2(\Omega))\cap L^2(0,T; W^{1,2}(\Omega))$ solves
    \begin{equation}
    \left\{
      \begin{array}{ll}
	\partial_t \theta_\varepsilon+\ww\cdot \nabla \theta_\varepsilon=-\nabla\cdot \q_\varepsilon+\mu(\theta_\varepsilon)|\nabla \ww|^2+|\Delta \d_\varepsilon-\f_\varepsilon(\d_\varepsilon)|^2, & \text{ {\rm in }} \ \ \Omega\times(0,T), \\
	\nabla\cdot \ww=0, & \text{ {\rm in} }\ \ \Omega\times(0,T), \\
	\theta_\varepsilon(x, 0)=\theta_0(x), & \text{ {\rm on } }\ \Omega, \\
	\ww=\q_\varepsilon\cdot\nu=0, &\text{ {\rm on } } \ \partial \Omega\times(0,T),
      \end{array}
      \right.
      \label{eqn:MAXT1}
    \end{equation}
    where $\q_\varepsilon=-k(\theta_\varepsilon)\nabla\theta_\varepsilon-h(\theta_\varepsilon)(\nabla \theta_\varepsilon\cdot \d_\varepsilon)\d_\varepsilon$,
   then
   \begin{equation}
     \theta_\varepsilon(x,t)\geq {{\essinf_{\Omega}} \theta_0} {\text{ a.e. in } \Omega\times[0,T].}
     \label{eqn:MAXT}
   \end{equation}
  \label{lemma:MAXT}
\end{Lemma}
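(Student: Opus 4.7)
The plan is to adapt a Stampacchia-style truncation argument to the anisotropic heat flux $\q_\varepsilon$. Set $m:=\essinf_\Omega \theta_0>0$ and define the truncation $\zeta_\varepsilon := (m-\theta_\varepsilon)_+$, which lies in $L^2(0,T;H^1(\Omega))$ by the regularity assumption on $\theta_\varepsilon$. Since $\theta_0 \geq m$ a.e., we have $\zeta_\varepsilon(\cdot,0)=0$. The goal is to show $\zeta_\varepsilon\equiv 0$ a.e.\ on $\Omega\times[0,T]$, which is equivalent to \eqref{eqn:MAXT}.

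First, I would test the temperature equation in \eqref{eqn:MAXT1} against $-\zeta_\varepsilon$ and integrate over $\Omega\times[0,\tau]$ for arbitrary $\tau\in(0,T]$. On the support $\{\theta_\varepsilon<m\}$ one has $\nabla\zeta_\varepsilon=-\nabla\theta_\varepsilon$ and $\partial_t\zeta_\varepsilon=-\partial_t\theta_\varepsilon$, so the material-derivative part telescopes to $\tfrac12\tfrac{d}{dt}\int_\Omega \zeta_\varepsilon^2$, with the convective piece vanishing after one further integration by parts using $\nabla\cdot\ww=0$ and $\ww|_{\partial\Omega}=0$. The production terms $\mu(\theta_\varepsilon)|\nabla\ww|^2$ and $|\Delta\d_\varepsilon-\f_\varepsilon(\d_\varepsilon)|^2$ are pointwise non-negative, so when tested against $-\zeta_\varepsilon\leq 0$ they contribute with the favorable sign and may be discarded. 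Integrating the flux term by parts and invoking $\q_\varepsilon\cdot\nu|_{\partial\Omega}=0$ yields $-\int \nabla\zeta_\varepsilon\cdot\q_\varepsilon$; substituting the constitutive law for $\q_\varepsilon$ and replacing $\nabla\theta_\varepsilon$ by $-\nabla\zeta_\varepsilon$ on $\{\zeta_\varepsilon>0\}$ converts this into
\[
-\int_\Omega\Bigl(k(\theta_\varepsilon)|\nabla\zeta_\varepsilon|^2 + h(\theta_\varepsilon)(\nabla\zeta_\varepsilon\cdot\d_\varepsilon)^2\Bigr)\,dx \leq 0,
\]
by the lower bound \eqref{eqn:coeffbound}. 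Combining everything produces $\tfrac12\int_\Omega \zeta_\varepsilon^2(\tau)\,dx\leq 0$ for a.e.\ $\tau$, hence $\zeta_\varepsilon\equiv 0$ and \eqref{eqn:MAXT} follows.

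The main conceptual issue to verify is that the anisotropic piece $-h(\theta_\varepsilon)(\nabla\theta_\varepsilon\cdot\d_\varepsilon)\d_\varepsilon$ of $\q_\varepsilon$ does not introduce a bad-sign contribution under the truncation test. The calculation above shows it reappears as the perfect square $h(\theta_\varepsilon)(\nabla\zeta_\varepsilon\cdot\d_\varepsilon)^2$, which is precisely what makes the generalized Fourier law \eqref{heat_flux} compatible with the minimum principle regardless of the orientation of $\d_\varepsilon$. A secondary, purely technical point is the justification of the chain rule $\zeta_\varepsilon\,\partial_t\theta_\varepsilon = -\tfrac12\partial_t\zeta_\varepsilon^2$ for $\theta_\varepsilon\in L^\infty_tL^2_x\cap L^2_tH^1_x$; this follows from a standard Steklov averaging (or mollification) argument together with the Stampacchia lemma for Sobolev-valued curves, and no additional regularity on $\theta_\varepsilon$ is required.
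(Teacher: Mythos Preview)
Your argument is correct and is essentially identical to the paper's own proof: the paper sets $\theta_\varepsilon^- := \max\{\essinf_\Omega\theta_0 - \theta_\varepsilon, 0\}$ (your $\zeta_\varepsilon$), derives the corresponding differential inequality, multiplies by $\theta_\varepsilon^-$, and integrates to obtain $\int_\Omega |\theta_\varepsilon^-|^2(\tau) + 2\underline{k}\int_0^\tau\int_\Omega(|\nabla\theta_\varepsilon^-|^2 + |\nabla\theta_\varepsilon^-\cdot\d_\varepsilon|^2)\le 0$. Your presentation is slightly more explicit about the sign of the anisotropic flux contribution and about the chain-rule justification, but the strategy and the computation are the same.
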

\begin{proof}
  Let $\theta_{\varepsilon}^-=\max\left\{{\essinf_{\Omega}\theta_0} -\theta_\varepsilon,0  \right\}$.
  Then by direct computation, {\eqref{eqn:MAXT1}} implies that
 \begin{equation}
    \left\{
      \begin{array}{ll}
	\partial_t \theta_{\varepsilon}^-+\ww\cdot \nabla \theta_{\varepsilon}^-\leq{-\nabla\cdot\q_\varepsilon^-}, & \text{ in }\Omega\times{(0,T)}, \\
	\nabla\cdot \ww=0, & \text{ in }\Omega\times{(0,T)}, \\
	\theta_{\varepsilon}^-(x, 0)=0, & \text{ on }\Omega, \\
	\ww=\q_\varepsilon^-\cdot\nu=0, &\text{ on }\partial \Omega\times{(0,T)},
      \end{array}
      \right.
      \label{eqn:MAXTN}
    \end{equation}
    where $\q_\varepsilon^-=-k(\theta_\varepsilon)\nabla \theta^-_\varepsilon-h(\theta_\varepsilon)(\nabla \theta^-_\varepsilon\cdot \d_\varepsilon)\d_\varepsilon$. \\
    Multiplying \eqref{eqn:MAXTN}$_1$ by $\theta_{\varepsilon}^-$ 
    and integrating over $\Omega\times[0,\tau]$ for $0<\tau\leq T$, we obtain
    \begin{equation*}
      \int_{\Omega}|\theta_{\varepsilon}^-|^2(\tau)+2\int_{0}^{\tau}\int_{\Omega}\underline{k}\left(|\nabla \theta_{\varepsilon}^-|^2+|\nabla\theta_{\varepsilon}^-\cdot\d_\varepsilon|^2\right)\leq 0.  \end{equation*}
    Therefore $\theta_{\varepsilon}^-=0$ a.e. {in} $\Omega\times[0,T]$,
    which yields \eqref{eqn:MAXT}.
\end{proof}

     \section{Existence of weak solutions to (\ref{eqn:modified-system})}

     In this section we will sketch the construction of weak solutions to (\ref{eqn:modified-system})
     by the Faedo-Galerkin method, which is similar to that by \cite{FFRS} and \cite{LL1}.
     To simplify the presentation, we only consider the case $\varepsilon=1$ and construct
     a weak solution of the following system:
    \begin{equation}
      \left\{
	\begin{array}{l}
	  \partial_t\u+\u\cdot \nabla \u+\nabla P=\div\left( \mu(\theta)\nabla \u-\nabla\d\odot \nabla \d \right), \\
	  \nabla \cdot \u=0, \\
	  \partial_t\d+\u\cdot \nabla \d=\Delta \d-\f(\d), \\
	 \partial_t\theta+\u\cdot \nabla \theta=-\div \q+\mu(\theta)|\nabla \u|^2+|\Delta \d-\f(\d)|^2,
	\end{array}
	\right.
	\label{eqn:modified-system}
      \end{equation}
      where {$\f(\d)=\partial_{\d} F(\d)=(|\d|^2-1)\d$.}
      
      Let $\left\{ \varphi_i \right\}_{i=1}^{\infty}$ be an orthonormal basis of $\mathbf{H}$ 
      formed by eigenfunctions of the Stokes operator on $\Omega$ with zero Dirichlet boundary condition, i.e.,
      \begin{equation*}
	\left\{
	  \begin{array}{ll}
	    -\Delta \varphi_i+\nabla P_i=\lambda_i \varphi_i & \text{ in }\Omega, \\
	    \nabla\cdot\varphi_i =0 & \text{ in }\Omega,\\
	    \varphi_i=0 & \text{ on }\partial \Omega,
	  \end{array}
	  \right.
	  \label{}
	\end{equation*}
	for $i=1, 2, \cdots$, and $0<\lambda_1\leq\lambda_2\leq\cdots\leq\lambda_n\leq \cdots$, with $\lambda_n\rightarrow\infty$.
	
	
	Let $\PP_m:\mathbf{H}\rightarrow \mathbf{H}_m=span\left\{ \varphi_1, \varphi_2, \cdots, \varphi_m \right\}$ be the orthogonal projection operator. Consider 
	\begin{equation} \label{eqn:um}
	  \left\{
	    \begin{array}{l}
	      \partial_t \u_m=\PP_m\big[-\u_m\cdot \nabla \u_m+\div\left(\mu(\theta_m)\nabla \u_m-\nabla \d_m\odot \nabla \d_m\right) \big], \\
	      \u_m(\cdot, t)\in {\bf H}_m, \ \ \forall t\in [0,T), \\
	      \u_m(x,0)=\PP_m(\u_0)(x),  \ \ \forall x\in\Omega,
	    \end{array}
	    \right.
	   \end{equation}
	  \begin{equation}
	    \left\{
	      \begin{array}{l}
		\partial_t \d_m+\u_m\cdot \nabla \d_m=\Delta \d_m-f(\d_m), \\
		\d_m(x, 0)=\d_0(x)\  \ \forall x\in\Omega, \\
	\frac{\partial\d_m}{\partial\nu}=0\ \ {\text{ on }}\ \partial \Omega,
	      \end{array}
	      \right.
	      \label{eqn:dm}
	    \end{equation}
	    \begin{equation}
	      \left\{
		\begin{array}{l}
		  \partial_t \theta_m+\u_m\cdot\nabla \theta_m=
		  \div \big(k(\theta_m)\nabla\theta_m+h(\theta_m)(\nabla\theta_m\cdot\d_m)\d_m\big)\\
		  \qquad\qquad\qquad\qquad\ \ \ \ +\mu(\theta_m)|\nabla \u_m|^2+|\Delta \d_m-\f(\d_m)|^2, \\
		   \theta_m(x, 0)=\theta_0(x)\ \ \forall x\in\Omega,\\
		   \frac{\partial\theta_m}{\partial\nu}=0 \ \ {\text{ on }}\ \partial\Omega.
		  \end{array}
		\right.
		\label{eqn:thetam}
	      \end{equation}
	      Since $\u_m(\cdot, t )\in \mathbf{H}_m$, 
	      we can write
	      \begin{equation*}
		\u_m(x,t)=\sum_{i=1}^{m}g_m^{(i)}(t)\varphi_i(x),
		\end{equation*}
	      so that \eqref{eqn:um}  becomes the following system of ODEs:
	      \begin{equation}\label{ODE}
		\frac{d}{dt}g_m^{(i)}(t)=A^{(i)}_{jk}g_m^{(j)}(t)g^{(k)}_m(t)+B_{mj}^{(i)}(t) g_m^{(j)}(t)+C_m^{(i)}(t),		          \end{equation}
		subject to the initial condition
		\begin{equation}\label{IC}
		g^{(i)}_m(0)=\int_\Omega\langle\u_0, \varphi_i\rangle,
	      \end{equation}
	      for $1\le i\le m$, where
	      \begin{align*}
		A^{(i)}_{jk}&=-\int_{\Omega}\langle\varphi_j\cdot \nabla \varphi_k,  \varphi_i\rangle, \\
		B^{(i)}_{mj}(t)&=-\int_{\Omega}\langle\mu(\u_m)\nabla \varphi_j,\nabla{\color{red}\varphi_i}\rangle,\\
		C_m^{(i)}(t)&=\int_{\Omega}(\nabla \d_m\odot \nabla \d_m):\nabla \varphi_i,
	       \end{align*}
	      for $1\le j, k\le m$.
	      
	      For $T_0>0$ and $M>0$ to be chosen later, suppose 
	      $\big(g_m^{(1)},\cdots, g_m^{(m)}\big)\in C^1([0,T_0])$ and
	      \begin{equation}\label{bound1}
	      \sup_{0\le t\le T_0}\sum_{i=1}^{m}|g_m^{(i)}(t)|^2\leq M^2.
	      \end{equation}
	      Since $\partial_t\u_m, \nabla^2 \u_m\in C^0(\Omega\times [0,T_0])$,
	      the standard theory of parabolic equations  implies that    
	      there exists a strong solution $\d_m$ to \eqref{eqn:dm} such that
	      for any $\delta>0$, $\partial_t\d_m,\nabla^2\d_m\in L^p(\Omega\times [\delta,T_0])$
	      for any $1\le p<\infty$ (see \cite{LSN}). Next we can solve  \eqref{eqn:thetam} to
	      obtain a nonnegative, strong solution $\theta_m$. In fact, observe that
	      $$k(\theta_m)\nabla\theta_m+h(\theta_m)(\nabla\theta_m\cdot\d_m)\d_m
	      =D(\theta_m)\nabla\theta_m,$$
	      where $(D_{ij}(\theta_m))=(k(\theta_m)\delta_{ij}+h(\theta_m)\d_m^i\d_m^j)$ is uniformly elliptic,
	      and $\mu(\theta_m)|\nabla\u_m|^2+|\Delta\d_m-\f(\d_m)|^2\in L^p(\Omega\times [\delta,T_0])$
	      holds for any $1<p<\infty$ and $\delta>0$.  Thus by the standard theory of parabolic equations,
	      we can first obtain a unique weak solution $\theta_m$ to \eqref{eqn:dm} such that 
	      $\theta_m\in C^\alpha(\overline\Omega\times [\delta, T_0])$ for some $\alpha\in (0,1)$. This
	      yields that the coefficient matrix $D(\theta_m)\in C(\overline\Omega\times [\delta, T_0])$
	      and hence by the regularity theory of parabolic equations we conclude that 
	      $\nabla\theta_m\in L^p(\Omega\times [\delta, T_0])$ for any $1<p<\infty$ and $\delta>0$.
	      Now we see that $\theta_m$ satisfies
	      $$
	      \partial_t\theta_m-D_{ij}(\theta_m)\frac{\partial^2\theta_m}{\partial x_i\partial x_j}
	      =D_{ij}'(\theta_m) \frac{\partial\theta_m}{\partial x_i} \frac{\partial\theta_m}{\partial x_j}
	      +\mu(\theta_m)|\nabla\u_m|^2+|\Delta\d_m-\f(\d_m)|^2,
	      $$
	      where $|D_{ij}'(\theta_m)|\le |h'(\theta_m)|+|k'(\theta_m)|$ is bounded, since $h$ and $k$
	      are Lipschitz continuous. Hence by the $W^{2,1}_p$-theory of parabolic
	      equations, $\partial_t\theta_m, \nabla^2\theta_m\in L^p(\Omega\times [\delta, T_0])$ 
	      for any $1<p<\infty$ and $\delta>0$.
	 
	      To solve \eqref{ODE} and \eqref{IC}, we need some apriori estimates.
	      Taking the $L^2$ inner product of \eqref{eqn:dm} with ${-\Delta \d_m+\f(\d_m)}$ yields
	      \begin{align*}
		\frac{d}{dt}\int_{\Omega}|\nabla \d_m|^2+2F(\d_m)
		&=-2\int_{\Omega}|\Delta \d_m-\f(\d_m)|^2
		+2\int_{\Omega}(\u_m\cdot \nabla \d_m)\cdot (\Delta \d_m-\f(\d_m))\\
		&\leq -\int_{\Omega}|\Delta \d_m-\f(\d_m)|^2+\int_{\Omega}|\u_m\cdot \nabla \d_m|^2, 
		\quad t\in [0,T_0].
	      \end{align*}
	      It follows from \eqref{bound1} that
	      \begin{equation*}
		\left\|\u_m\right\|_{L^\infty(\Omega\times [0,T_0])}
		\leq M\cdot \max_{1\leq i\leq m}\left\|\varphi_i\right\|_{L^\infty(\Omega)}\leq C_m M.
		\label{}
	      \end{equation*}
Therefore we get
$$\frac{d}{dt}\int_{\Omega} (|\nabla \d_m|^2+2 F(\d_m)) +\int_{\Omega}|\Delta \d_m-\f(\d_m)|^2\leq C_m^2 M^2 \int_{\Omega}|\nabla \d_m|^2.$$
This, combined with Gronwall's inequality and $F(\d_0)=0$, implies
$$\sup_{0\le t\le T_0}\int_{\Omega}(|\nabla \d_m|^2+F(\d_m))
+\int_0^{T_0}\int_{\Omega}|\Delta \d_m-\f(\d_m)|^2
\leq e^{C_m^2 M^2 T_0}\int_{\Omega}|\nabla \d_0|^2,$$ 
so that
\begin{equation*}
  \sup_{0\le t\le T_0}\max_{1\le i,j\le m}\big(|B_{mj}^{(i)}(t)|+|C_m^{(i)}(t)|\big)\leq C_0(m,M).
  \label{}
\end{equation*}
Thus we can solve  \eqref{ODE}
and \eqref{IC} to obtain a unique solution $(\tilde{g}_m^{(1)}(t),\cdots, \tilde{g}_m^{(m)}(t))\in C^1([0,T_0])$ 
such that for all $t\in [0,T_0]$
\begin{equation}
\sum_{i=1}^{m}|\tilde{g}_m^{(i)}(t)|^2
\leq \sum_{i=1}^{m}|{g}_m^{(i)}(0)|^2 +C(m, M, \underline{\mu}, \overline{\mu},\underline{k}, \overline{k}) t^2.
  \label{eqn:ODEestimate}
\end{equation}
Choose $M=2+2\sum_{i=1}^{m}|g_m^{(i)}(0)|^2$ and $T_0>0$ so small that the right-hand side of \eqref{eqn:ODEestimate} is less than $M^2$ for all $t\in [0,T_0]$. Set $\tilde{\u}_m:\Omega\times [0,T_0]\to \mathbb R^3$ by
$$\tilde{\u}_m(x,t)=\sum_{i=1}^{m}\tilde{g}_m^{(i)}(t)\varphi_i(x).$$
Then $\mathcal{L}(\u_m)=\tilde{\u}_m$ defines
a map from $\mathbf{U}(T_0)$ to $\mathbf{U}(T_0)$, 
where 
\begin{eqnarray*}
\mathbf{U}(T_0)=\Big\{ \u_m(x,t)=
\sum_{i=1}^{m}g_m^{(i)}(t)\varphi_i(x):\max_{t\in [0,T_0]}\sum_{i=1}^{m}|g_m^{(i)}(t)|^2\leq M^2, 
\quad \u_m(0)=\mathbb P_m\u_0\Big\}.
\end{eqnarray*}
Since $\mathbf{U}(T_0)$ is a closed, convex subset of $H_0^1(\Omega)$ and $\mathcal{L}$ is a compact operator, it follows from the Leray-Schauder theorem that $\mathcal{L}$ has a fixed point
$\u_m\in \mathbf{U}(T_0)$ for the approximation system \eqref{eqn:um}, 
and a classical solution $\d_m$ to \eqref{eqn:dm} and $\theta_m$ to \eqref{eqn:thetam} on $\Omega\times [0,T_0]$, see \cite{Evans}. 

Next, we will {establish} a priori estimates and show that the solution can be extended to $[0,T]$. 
To do it, taking the $L^2$ inner product of \eqref{eqn:um} and \eqref{eqn:dm} by $\u_m$ and $-\Delta \d_m+\f(\d_m)$ respectively, and adding together these two equations, we get that for $t\in [0, T_0]$, 
\begin{equation}
\frac{d}{dt}\int_{\Omega}(|\u_m|^2+|\nabla \d_m|^2 +{2}F(\d_m))
  +2\int_{\Omega}\mu(\theta_m)|\nabla \u_m|^2+|\Delta \d_m-\f(\d_m)|^2=0,
  \label{eqn:approximateInternal}
\end{equation}
where we use the identities
$$\int_{\Omega}\u_m\cdot \div(\nabla \d_m\odot \nabla \d_m)
=\int_\Omega (\u_m\cdot\nabla\d_m)\cdot\Delta\d_m,$$
$$\int_{\Omega}(\u_m\cdot \nabla \d_m)\cdot \f(\d_m)=\int_{\Omega} \u_m \cdot  \nabla F(\d_m)=0.$$
We can derive from \eqref{eqn:approximateInternal} that
\begin{eqnarray}
  &&\sup_{0\leq t\leq T_0}\int_{\Omega}(|\u_m|^2+|\nabla \d_m|^2+{2}F(\d_m))
  +2\int_{0}^{T_0}\int_{\Omega}\mu(\theta_m)|\nabla \u_m|^2+|\Delta \d_m-\f(\d_m)|^2 \nonumber\\
  &&\leq \int_{\Omega}(|\u_0|^2+|\nabla \d_0|^2).
  \label{eqn:approximateInternal2}
\end{eqnarray}
Lemma \ref{lemma:MAX1} implies that 
$|\d_m|\leq 1$ and $|\f(\d_m)|\le 1$ in $\Omega\times[0,T_0]$, so that
\begin{align*}
  \int_{0}^{T_0}\int_{\Omega}|\Delta \d_m|^2
  \le 2\int_{0}^{T_0}\int_{\Omega}(1+|\Delta \d_m-\f(\d_m)|^2).
  \end{align*}
Hence \eqref{eqn:approximateInternal2} yields thqat
\begin{eqnarray}
 && \sup_{0\leq t\leq T_0}\int_{\Omega}(|\u_m|^2+|\nabla \d_m|^2)
 +\int_{0}^{T_0}\int_{\Omega}(\underline{\mu}|\nabla \u_m|^2+|\Delta \d_m|^2)\nonumber\\
 && \leq \int_{\Omega}(|\u_0|^2+|\nabla \d_0|^2)+CT_0|\Omega|.
  \label{eqn:approximateInternal3}
\end{eqnarray}
While the integration of \eqref{eqn:thetam} over $\Omega$ yields
\begin{equation}\label{eqn:thetam-id}
\frac{d}{dt}\int_\Omega \theta_m =\int_\Omega (\mu(\theta_m) |\nabla\u_m|^2+|\Delta \d_m-\f(\d_m)|^2).
\end{equation}
Adding \eqref{eqn:approximateInternal} together with \eqref{eqn:thetam-id} and integrating 
over $[0,T_0]$, we obtain 
 \begin{equation}\label{bound4}
\sup_{0\le t\le T_0}
\int_\Omega(|\u_m|^2+|\nabla \d_m|^2+\theta_m) 
\le \int_\Omega (|\u_0|^2+|\nabla\d_0|^2+\theta_0).
\end{equation}

Next by choosing $H(\theta)=(1+\theta)^\alpha$, $\alpha\in (0,1)$, and multiplying
the equation \eqref{eqn:thetam} by $H'(\theta_m)=\alpha (1+\theta_m)^{\alpha-1}$,
we get
\begin{eqnarray}
  &&\partial_t(1+\theta_m)^\alpha+\u_m\cdot \nabla(1+\theta_m)^\alpha\nonumber\\
  &&=-\div\left( \alpha(1+\theta_m)^{\alpha-1}\q_m \right)
  +\alpha(1+\theta_m)^{\alpha-1}\left( \mu(\theta_m)|\nabla \u_m|^2+|\Delta \d_m-\f(\d_m)|^2 \right)\nonumber\\
  &&\quad+\alpha(\alpha-1)(1+\theta_m)^{\alpha-2}\q_m\cdot \nabla \theta_m,
  \label{eqn:approximateEntropy}
\end{eqnarray}
where $\q_m=-h(\theta_m)\nabla\theta_m-k(\theta_m)(\nabla\theta_m\cdot\d_m)\d_m$. 

Integrating \eqref{eqn:approximateEntropy} over ${\Omega\times[0,T_0]}$ yields
\begin{equation}
\int_{0}^{T_0}\int_{\Omega}\alpha(\alpha-1)(1+\theta_m)^{\alpha-2}\q_m\cdot \nabla\theta_m
\le \int_{\Omega\times\{T_0\}} (1+\theta_m)^\alpha-\int_{\Omega} (1+\theta_0)^\alpha.
\end{equation}
Notice that 
\begin{align*}
  &\int_{0}^{T_0}\int_{\Omega}\alpha(\alpha-1)(1+\theta_m)^{\alpha-2}\q_m\cdot \nabla\theta_m\\
  &=\alpha(1-\alpha)\int_{0}^{T_0}\int_{\Omega}(1+\theta_m)^{\alpha-2}(k(\theta_m)|\nabla\theta_m|^2+h(\theta_m)(\nabla\theta_m\cdot \d_m)^2)\\
  &\ge \alpha(1-\alpha)\underline{k}\int_0^{T_0}\int_\Omega (1+\theta_m)^{\alpha-2}|\nabla\theta_m|^2\\
  &\ge\frac{4\alpha(1-\alpha)\underline{k}}{\alpha^2}\int_0^{T_0}\int_\Omega |\nabla\theta_m^{\frac{\alpha}2}|^2.
  \end{align*}
 Thus  we obtain that
 \begin{eqnarray}
 \int_{0}^{T_0}\int_{\Omega}\big|\nabla \theta_m^{\frac{\alpha}{2}}\big|^2
 &&\le C(\alpha,\underline{k}) \int_{\Omega\times\{T_0\}} (1+\theta_m)^\alpha\nonumber\\
 &&\le  C(\alpha,\underline{k}, \Omega) \big(\int_{\Omega\times\{T_0\}} (1+\theta_m)\big)^\alpha\nonumber\\
 && \leq C(\alpha, \underline{k}, \Omega) \Big(1+\int_\Omega (|\u_0|^2+|\nabla \d_0|^2+\theta_0)\Big)^\alpha.
  \label{eqn:nablathetam}
\end{eqnarray}
With \eqref{bound4} and \eqref{eqn:nablathetam}, we can apply an interpolation 
argument, similar to (4.13) in \cite{FFRS}, to conclude that $\theta_m\in L^q(\Omega\times [0,T_0])$
for any $1\le q<\frac53$, and 
\begin{equation}\label{bound6}
  \left\|\theta_m\right\|_{L^q({\Omega\times[0,T]})}\leq C\big(q, \underline{k}, \|\u_0\|_{L^2(\Omega)},
  \|\nabla\d_0\|_{L^2(\Omega)}, \|\theta_0\|_{L^1(\Omega)}\big). 
\end{equation}
This, together with \eqref{eqn:nablathetam} and H\"{o}lder's inequality: 
\begin{equation*}
  \int_{{\Omega\times[0,T_0]}}|\nabla \theta_m|^p\leq \big( \int_{{\Omega\times[0,T_0]}}|\nabla \theta_m|^2 \theta_m^{\alpha-2} \big)^{\frac{p}{2}}
  \big( \int_{{\Omega\times[0,T_0]}}\theta_m^{(2-\alpha)\frac{p}{2-p}} \big)^{\frac{2-p}{2}},
  \end{equation*}
  for $\alpha\in (0,1)$ and $1\le p<2$,  implies that 
  \begin{equation}
  \big\|\nabla\theta_m\big\|_{L^{{\color{red} p}}(\Omega\times [0,T_0])}
  \le C\big({\color{red} p},\underline{k}, \|\u_0\|_{L^2(\Omega)},
  \|\nabla\d_0\|_{L^2(\Omega)}, \|\theta_0\|_{L^1(\Omega)}\big)
  \label{eqn:Lpthetam}
\end{equation}
holds for all $p\in [1,5/4)$.

Plugging the estimates \eqref{eqn:approximateInternal3}, \eqref{bound4}, \eqref{bound6},
and \eqref{eqn:Lpthetam} into the system \eqref{eqn:um}, \eqref{eqn:dm}, and \eqref{eqn:thetam},
we conclude that 
\begin{equation}
  \sup_m\big\{\left\|\partial_t \u_m\right\|_{L^{\frac{4}{3}}(0,T_0;H^{-1}(\Omega))}
  +\left\|\partial_t \d_m\right\|_{L^{\frac{4}{3}}(0,T_0;L^2(\Omega))}
  +\left\|\partial_t\theta_m\right\|_{L^2(0,T_0; W^{-1,4}(\Omega)}\big\}\leq C.
  \label{eqn:udmt}
\end{equation}
Therefore, by setting $\big(\u_m(\cdot, T_0), \d_m(\cdot, T_0),\theta_m(\cdot, T_0)\big)$
as then initial data and repeating the same argument,  we can extend the solution 
to the interval  $[0,2T_0]$ and eventually obtain a solution 
$(\u_m, \d_m, \theta_m)$ to the  system \eqref{eqn:um}, \eqref{eqn:dm}, \eqref{eqn:thetam}
in $[0,T]$ such that  the estimates \eqref{eqn:approximateInternal3}, \eqref{bound4}, \eqref{bound6}, \eqref{eqn:Lpthetam}, and \eqref{eqn:udmt} hold with $T_0$ replaced by $T$.

The existence of a weak  solution to the original system \eqref{eqn:modified-system} will be obtained
by passing to the limit of $(\u_m, \d_m, \theta_m)$ as $m\to\infty$.
In fact, by {Aubin-Lions'} compactness lemma \cite{Simon1996}, we know that
 there exists $\u\in L_t^\infty L_x^2\cap L_t^2 H_x^1(\Omega\times [0,T])$,
 $\d\in L_t^\infty H_x^1\cap L^2_tH^2_x(\Omega\times [0,T])$,
 and a nonnegative $\theta\in L^\infty_tL^1_x\cap L^p_tW^{1,p}_x(\Omega\times [0,T])$,
 for $1<p<\frac54$, such that, after passing to a subsequence,
\begin{equation*}
  \left\{
    \begin{array}{ll}
      \u_m\rightarrow \u & \text{ in }L^2({\Omega\times[0,T])},\\
      (\d_m,\nabla\d_m)\rightarrow (\d,\nabla\d) & \text{ in }L^2({\Omega\times[0,T])},\\
      \theta_m\rightarrow\theta \ & a.e. \ and \text{ in } L^{p_1}(\Omega\times [0,T]), \ \forall 1<p_1<\frac53,\\
      \nabla \u_m\rightharpoonup \nabla \u & \text{ in } L^2({\Omega\times[0,T])},\\
      \nabla^2 \d_m\rightharpoonup \nabla^2 \d & \text{ in } L^2({\Omega\times[0,T])},\\
       \nabla\theta_m\rightharpoonup \nabla\theta & \text{ in } L^{p_2}(\Omega\times [0,T]),
        \ \forall 1<p_2<\frac54.
    \end{array}
    \right.
    \label{}
  \end{equation*}
  Since $\mu\in C([0,\infty))$ is bounded, we have that
  $$\mu(\theta_m)\rightarrow\mu(\theta) \ \ {\rm{in}}\ \ L^p(\Omega\times [0,T]), \ \forall 1\le p<\infty,$$
and
  $$
  \mu(\theta_m)\nabla\u_m\rightharpoonup \mu(\theta)\nabla \u 
  \ \ {\rm{in}}\ \ L^2(\Omega\times [0,T]).
  $$
After passing $m\to\infty$ in the the equations \eqref{eqn:um} and \eqref{eqn:dm}, 
we see that $(\u,\d, \theta)$ satisfies the equations $\eqref{eqn:modified-system}_1$, 
 $\eqref{eqn:modified-system}_2$, and $\eqref{eqn:modified-system}_3$ in the weak sense. 
 
Next we want to verify that $\theta$ satisfies 
\begin{eqnarray}\label{entropy1}
&&\int_0^T\int_\Omega \big(H(\theta)\partial_t\psi+(H(\theta)\u-H'(\theta)\q)\cdot\nabla\psi\big)\nonumber\\
&&\le -\int_0^T\int_\Omega\big[H'(\theta)(\mu(\theta)|\nabla\u|^2+|\Delta\d-\f(\d)|^2)-H''(\theta)\q\cdot\nabla\theta\big]\psi\nonumber\\
&&\ \ \ -\int_\Omega H(\theta_0)\psi(\cdot,0)
\end{eqnarray}
holds for any smooth, non-decreasing and concave function $H$, and $\psi\in C^\infty_0(\overline\Omega\times [0,T))$ with $\psi\ge 0$.  Here $\q=-k(\theta)\nabla\theta-h(\theta)(\nabla\theta\cdot \d)\d.$
Observe that by choosing $H(t)=t$, \eqref{entropy1} yields that $\theta$ solves $\eqref{eqn:modified-system}_4$ in the weak sense, namely, 
\begin{eqnarray}\label{theta1}
&&\int_0^T\int_\Omega \big(\theta\partial_t\psi+(\theta\u-\q)\cdot\nabla\psi\big)\nonumber\\
&&\le -\int_0^T\int_\Omega(\mu(\theta)|\nabla\u|^2+|\Delta\d-\f(\d)|^2)\psi
-\int_\Omega \theta_0\psi(\cdot,0).
\end{eqnarray}

In order to show \eqref{entropy1},   first observe that multiplying the equation 
\eqref{eqn:thetam} by $H'(\theta_m)\psi$, integrating over $\Omega\times [0,T]$,
and employing the regularity of $\theta_m, \u_m, \d_m$ implies
\begin{eqnarray}\label{entropy2}
&&\int_0^T\int_\Omega \big(H(\theta_m)\partial_t\psi+(H(\theta_m)\u_m-H'(\theta_m)\q_m)\cdot\nabla\psi\big)\nonumber\\
&&= -\int_0^T\int_\Omega\big[H'(\theta_m)(\mu(\theta_m)|\nabla\u_m|^2+|\Delta\d_m-\f(\d_m)|^2)-H''(\theta_m)\q_m\cdot\nabla\theta_m\big]\psi\nonumber\\
&&\ \ \ -\int_\Omega H(\theta_0)\psi(\cdot,0),
\end{eqnarray}
where $\q_m=-k(\theta_m)\nabla\theta_m-h(\theta_m)(\nabla\theta_m\cdot \d_m)\d_m.$

{It follows from Lemma \ref{theta-est}  that $\theta_m\geq \essinf_{\Omega} \theta_0$ a.e.. Without loss of generality, we assume $H(0)=0$ so that $H(\theta_m)\geq H(\essinf_\Omega \theta_0)\geq 0$ since $H$ is nondecreasing. From $H''\leq0$, we conclude that $0\leq H'(\theta_m)\leq H'(\essinf_{\Omega} \theta_0)$. }
 From the concavity of $H$, we have
$$\frac{1}{|\Omega|}\int_\Omega H(\theta_m)\le H(\frac{1}{|\Omega|}\int_\Omega \theta_m)$$
so that 
$$\{H(\theta_m)\} \ \mbox{is bounded in} \ \ L^\infty_t L^1_x
\cap L^p_t W^{1,p}_x(\Omega\times [0,T]), \ \forall 1<p<\frac54.
$$
This, combined with the 
bounds on $\theta_m, \u_m, \d_m$ and \eqref{entropy2}, implies
that 
\begin{eqnarray*}
&&\int_0^T\int_\Omega H''(\theta_m) \q_m\cdot\nabla\theta_m\psi\\
&&=\int_0^T\int_\Omega (|\sqrt{-H''(\theta_m) k(\theta_m)\psi}\nabla \theta_m|^2
+|\sqrt{-H''(\theta_m) h(\theta_m)\psi}(\nabla \theta_m\cdot\d_m)|^2)
\end{eqnarray*}
is uniformly bounded. 
{For any fixed $l\in\mathbb{N}^+$, since}
$$\sqrt{\min\{-H''(\theta_m), l\} k(\theta_m)\psi}\nabla \theta_m\rightharpoonup
\sqrt{\min\{-H''(\theta),l\} k(\theta)\psi}\nabla \theta, $$
and
$$
\sqrt{\min\{-H''(\theta_m),l\} h(\theta_m)\psi}(\nabla \theta_m\cdot\d_m)\rightharpoonup
\sqrt{\min\{-H''(\theta),l\} h(\theta)\psi}(\nabla \theta\cdot\d)
$$
in $L^p(\Omega\times [0,T]$ for $1<p<\frac54$,  we have by the lower semicontinuity that
\begin{equation}\label{conv}
\begin{split}
\int_0^T\int_\Omega \min\{-H''(\theta),l\} \q\cdot\nabla\theta\psi
&\le\liminf_{m\to \infty}\int_0^T\int_\Omega \min\{-H''(\theta_m),l\} \q_m\cdot\nabla\theta_m\psi\\
&\leq \liminf_{m\to \infty}\int_0^T\int_\Omega -H''(\theta_m)\q_m\cdot\nabla\theta_m\psi.
\end{split}
\end{equation}
{This, after sending $l\to\infty$, yields 
	\begin{equation}\label{conv1}
		\int_0^T\int_{\Omega}-H''(\theta)\q\cdot\nabla\theta \psi\leq \liminf_{m\to \infty}\int_0^T\int_\Omega -H''(\theta_m)\q_m\cdot\nabla\theta_m\psi.
	\end{equation}
}
It follows from the lower semicontinuity again that
\begin{eqnarray}\label{conv2}
&&\int_0^T\int_\Omega\big[H'(\theta)(\mu(\theta)|\nabla\u|^2+|\Delta\d-\f(\d)|^2)\psi\nonumber\\
&&\le\liminf_{m\to\infty}
\int_0^T\int_\Omega\big[H'(\theta_m)(\mu(\theta_m)|\nabla\u_m|^2+|\Delta\d_m-\f(\d_m)|^2)\psi.
\end{eqnarray}
On the other hand, since 
$$H(\theta_m)\to H(\theta), \  H(\theta_m)\u_m\to H(\theta)\u \ \ {\rm{in}}\ \ L^1(\Omega\times [0,T]),$$
and
$$H'(\theta_m)\q_m\rightharpoonup H'(\theta) \q \ \ {\rm{in}}\ \ L^1(\Omega\times [0,T]),$$
we have
\begin{eqnarray}\label{conv3}
&&\int_0^T\int_\Omega \big(H(\theta)\partial_t\psi+(H(\theta)\u-H'(\theta)\q)\cdot\nabla\psi\big)\nonumber\\
&&=\lim_{m\to\infty}
\int_0^T\int_\Omega \big(H(\theta_m)\partial_t\psi+(H(\theta_m)\u_m-H'(\theta_m)\q_m)\cdot\nabla\psi\big).
\end{eqnarray}
Therefore \eqref{entropy1} follows by passing $m\to\infty$ in \eqref{entropy2} and applying
\eqref{conv1}, \eqref{conv2}, and \eqref{conv3}.  This completes the construction of a global weak solution
to \eqref{eqn:modified-system}. \qed

\section{Convergence and existence of global weak solutions of \eqref{eqn:1.4}}

In this section, we will apply Lemma \ref{lemma:MAX1}, Lemma \ref{lemma:MAX2}, and Lemma \ref{lemma:MAXT} to analyze the convergence of a sequence of weak solutions 
$(\u_\varepsilon, \d_\varepsilon, \theta_\varepsilon)$ to the Ginzburg-Landau approximate
system \eqref{eqn:1.3} constructed in the previous section,  as $\varepsilon\to 0$,
and obtain  a global weak solution $(\u,\d,\theta)$ to  \eqref{eqn:1.4}. 

Here we will employ the pre-compactness theorem by Lin-Wang \cite{LW2} on
approximated harmonic maps to show that $\d_\varepsilon\to\d$ in 
$L^2([0,T], H^1(\Omega))$ as $\varepsilon\to 0$.

\begin{proof}[Proof of Theorem \ref{thm:WeakErisksenLesile}]

Let $(\u_\varepsilon, \d_\varepsilon, \theta_\varepsilon)$ be the weak solutions to the Ginzburg-Landau
approximate system \eqref{eqn:1.3}, under the boundary condition (\ref{eqn:Boundaryud}), (\ref{eqn:Boundaryq}),  obtained from Section 5. Then  there exist $C_1, C_2>0$ depending only
on $\u_0$, $\d_0$, and $\theta_0$ such that 
\begin{eqnarray*}
 &&\sup_{\varepsilon}\Big\{\|\u_{\varepsilon}\|_{L^\infty_tL_x^2\cap L_t^2H_x^1(\Omega\times [0,T])}
 +\|\d_\varepsilon\|_{L^\infty_tH^1_x(\Omega\times [0,T])}\Big\}\le C_1,\\
 &&\sup_{\varepsilon}\|\theta_\varepsilon\|_{L^\infty_t L^1_x\cap L^p_t W^{1,p}_x(\Omega\times [0,T])} 
 \le C_2(p), \ \forall\ p\in (1, \frac54),
 \end{eqnarray*}
 \begin{eqnarray}
 &&\int_{\Omega\times\{t\}} (|\u_\varepsilon|^2+|\nabla\d_\varepsilon|^2+\frac{2}{\varepsilon^2}{F(\d_\varepsilon)})
 +2\int_0^t\int_\Omega \big(\mu(\theta_\varepsilon)|\nabla\u_\varepsilon|^2
 +|\Delta\d_\varepsilon-\frac{1}{\varepsilon^2}\f(\d_\varepsilon)|^2\big)\nonumber\\
 &&\le \int_\Omega (|\u_0|^2+|\nabla\d_0|^2), \ \forall t\in [0, T], \label{global_bd1}
 \end{eqnarray}
  \begin{eqnarray}
 \int_{\Omega\times\{t\}} (|\u_\varepsilon|^2+|\nabla\d_\varepsilon|^2+\frac{2}{\varepsilon^2}{F(\d_\varepsilon)}
 +\theta_\varepsilon)
\le \int_\Omega (|\u_0|^2+|\nabla\d_0|^2+\theta_0), \ \forall t\in [0, T], \label{global_bd2}
 \end{eqnarray}
 and
 \begin{equation}\label{global_bd3}
 |\d_{\varepsilon}|\leq 1, \ {\d_{\varepsilon}^{3}\ge 0, \ \theta_\varepsilon\ge \essinf_{\Omega}\theta_0},
 \ \ {\rm{in}}\ \ \Omega\times [0,T].
 \end{equation}
 Applying the equation \eqref{eqn:1.3}, we can further deduce that
 \begin{equation}
 \sup_{\varepsilon}\Big\{\|\partial_t u_\varepsilon\|_{L^\frac43([0,T], H^{-1}(\Omega)}
 +\|\partial_t\d_\varepsilon\|_{L^\frac43([0,T], L^2(\Omega))}
 +\|\partial_t\theta_\varepsilon\|_{L^2([0,T], W^{-1,4}(\Omega)}\Big\}<C_3.
 \end{equation}
 Therefore, after passing to a subsequence, there exist
 $\u\in L^\infty_tL^2_x\cap L^2_tH^1_x(\Omega\times [0,T]),
 \d\in L^\infty_tH^1_x(\Omega\times [0,T]), \theta\in L^\infty_tL^1_x\cap L^p_tW^{1,p}_x(\Omega\times [0,T])$
 for $1<p<\frac54$ such that 
  \begin{equation}
   \left\{
     \begin{array}{ll}
       (\u_{\varepsilon}, \d_{\varepsilon})\rightarrow (\u, \d) & \text{ in }L^2(\Omega\times {  (0,T)}), \\
       (\nabla \u_{\varepsilon}, \nabla \d_{\varepsilon})\rightharpoonup(\nabla \u, \nabla \d) & \text{ in }{L^2}(\Omega\times {  (0,T)})
     \end{array}
     \right.
     \label{eqn:weakL2conv}
   \end{equation}
   as $\varepsilon\rightarrow 0$. Since
   $$\int_{\Omega\times [0,T]} F(\d)\le\lim_{\varepsilon}\int_{\Omega\times [0,T]} F(\d_\varepsilon)=0,$$
   we conclude that $|\d|=1$ a.e. in $\Omega\times [0,T]$. Sending $\varepsilon\to 0$ in the equations
   \eqref{eqn:1.3}$_{2,3}$, we obtain that 
   $$\nabla\cdot\u=0\ a.e.\ in \ \Omega\times [0,T],$$
   and
   $$(\partial_t\d+\u\cdot\nabla\d)\times \d=\nabla\cdot(\nabla \d\times \d)
   \ \ {\rm{weakly \ in}}\ \Omega\times [0,T],$$
 which, combined with the fact that $\d$ is $\mathbb S^2$-valued, implies
 that
 \begin{equation}\label{d-eqn}
 \partial_t\d+\u\cdot\nabla\d=\Delta\d+|\nabla\d|^2\d
  \ \ {\rm{weakly \ in}}\ \Omega\times [0,T].
 \end{equation}
 {Hence \eqref{weakd} holds.}
 
 To verify that $\u$ satisfies the equation \eqref{eqn:1.4}$_1$, we need to show
 that $\nabla \d_{\varepsilon}$ converges to $\nabla \d$ in $L^2_{\text{loc}}(\Omega\times(0,T))$.
 which makes sense of $\nabla\cdot(\nabla \d\odot \nabla \d)$. 
 We also need to justify the convergence of temperature equation 
 \eqref{eqn:1.4}$_4$.  For this purpose, we recall some basic notations and theorems in \cite{LW2} 
 that are needed in the proof.

 For any $0<a\leq 2$, $L_1$ and $L_2>0$, denote by
 $\mathcal{X}(L_1, L_2, a)$  the space that consists of weak solutions $\d_\varepsilon$ of
 \begin{equation*}
   \Delta \d_\varepsilon-\f_\varepsilon(\d_\varepsilon)=\tau_\varepsilon \text{ in }\Omega
   \label{}
 \end{equation*}
such that
 \begin{enumerate}
   \item {$|\d_\varepsilon|\leq 1$ and ${\color{red}\d_\varepsilon^{(3)}\geq -1+a}$ for $x$ a.e. in $\Omega$,}
   \item $E_\varepsilon(\d_\varepsilon)=\int_{\Omega}\frac{1}{2}|\nabla \d_\varepsilon|^2+3 F_\varepsilon(\d_\varepsilon) dx\leq L_1$,
   \item $\left\|\tau_\varepsilon\right\|_{L^2(\Omega)}\leq L_2$.
 \end{enumerate}
The following Theorem  concerning the $H^1$ pre-compactness of $\mathcal{X}(L_1, L_2, a)$ was 
shown by \cite{LW2}.
 \begin{Theorem}\label{LW2.0}
 For any $a\in (0,2]$, $L_1>0$ and $L_2>0$, the set $\mathcal{X}(L_1, L_2, a)$ is precompact in $H_{\loc}^1(\Omega; \R^3).$ Namely, if $\left\{ \d_\varepsilon \right\}$ is a sequence of maps in $\mathcal{X}(L_1,L_2, a)$, then there exists a map $\d\in H^1(\Omega; \mathbb{S}^2)$ such that, after passing to a possible subsequence, $\d_\varepsilon\rightarrow \d$ in $H_{\loc}^1(\Omega;\R^3).$
   \label{thm:X}
 \end{Theorem}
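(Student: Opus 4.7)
My plan is to combine a standard weak-convergence extraction with a concentration-compactness/bubble analysis in which the image constraint $\d_\varepsilon^{(3)}\geq -1+a$ plays the crucial topological role of ruling out all nontrivial bubbles. Using the uniform bound $E_\varepsilon(\d_\varepsilon)\leq L_1$, a subsequence satisfies $\d_\varepsilon\rightharpoonup\d$ in $H^1(\Omega;\R^3)$, $\d_\varepsilon\to\d$ in $L^2(\Omega)$, and $\d_\varepsilon\to \d$ a.e. Since $\int_\Omega F_\varepsilon(\d_\varepsilon)\leq L_1/3$, Fatou's lemma forces $|\d|=1$ a.e. The pointwise constraints $|\d_\varepsilon|\le 1$ and $\d_\varepsilon^{(3)}\ge -1+a$ pass to the limit, giving $\d\in H^1(\Omega;\mathbb{S}^2)$ with $\d(\Omega)\subset\{y\in\mathbb{S}^2:y^{(3)}\ge -1+a\}$.

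Next, I let $\mu$ be the weak-$*$ limit (along a further subsequence) of the Radon measures $e_\varepsilon(\d_\varepsilon)\,dx$, where $e_\varepsilon(\d_\varepsilon):=\tfrac12|\nabla\d_\varepsilon|^2+F_\varepsilon(\d_\varepsilon)$. Lower semicontinuity gives $\mu\geq \tfrac12|\nabla\d|^2\,dx$, so I write $\mu=\tfrac12|\nabla\d|^2\,dx+\nu$ with $\nu$ a nonnegative Radon measure (the defect measure). The Hilbert-space identity
\begin{equation*}
\|\nabla\d_\varepsilon-\nabla\d\|_{L^2(K)}^2=\|\nabla\d_\varepsilon\|_{L^2(K)}^2-2\langle\nabla\d_\varepsilon,\nabla\d\rangle_{L^2(K)}+\|\nabla\d\|_{L^2(K)}^2
\end{equation*}
reduces strong $H^1$-convergence on $K\Subset\Omega$ to $\nu(K)=0$. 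Testing the equation $\Delta\d_\varepsilon-\f_\varepsilon(\d_\varepsilon)=\tau_\varepsilon$ against $(x-x_0)\cdot\nabla\d_\varepsilon\,\chi$ with $\chi$ a radial cutoff, and absorbing $\tau_\varepsilon$ via $\|\tau_\varepsilon\|_{L^2}\leq L_2$, would yield a quasi-monotonicity formula for $r\mapsto r^{-1}\int_{B_r(x_0)}e_\varepsilon(\d_\varepsilon)$ (in dimension $n=3$) that controls the structure of $\nu$ and enables a blow-up analysis at each concentration point.

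Suppose, for contradiction, $\nu\not\equiv 0$, and pick $x_0\in\supp(\nu)$. I would choose scales $r_\varepsilon\to 0$ so that $\tilde\d_\varepsilon(y):=\d_\varepsilon(x_0+r_\varepsilon y)$ retains a definite amount of energy. A standard bubble-extraction procedure (as in Sacks--Uhlenbeck/Struwe, adapted to the Ginzburg--Landau approximation via the quasi-monotonicity of the previous step) yields, possibly after a secondary blow-up, a non-constant, smooth, finite-energy harmonic map $\hat\omega:\mathbb{S}^2\to\mathbb{S}^2$. The bound $\d_\varepsilon^{(3)}\ge -1+a$ is preserved by rescaling and by $H^1$-limits, so $\hat\omega(\mathbb{S}^2)\subset\{y^{(3)}\ge -1+a\}$ and in particular $\hat\omega$ misses the south pole $(0,0,-1)$. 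But every non-constant harmonic map $\mathbb{S}^2\to\mathbb{S}^2$ is, up to orientation-reversal, a non-constant rational function and therefore surjective---contradiction. Hence $\nu\equiv 0$ and $\d_\varepsilon\to\d$ strongly in $H^1_{\text{loc}}(\Omega;\R^3)$.

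The main obstacle is the bubble-extraction step: one must guarantee that the limit $\hat\omega$ is non-constant while controlling both the $L^2$-perturbation $\tau_\varepsilon$ and the Ginzburg--Landau penalty $F_\varepsilon$ throughout the rescaling. This requires an $\varepsilon$-regularity-type small-energy estimate for approximated harmonic maps together with a careful concentration-compactness argument adapted to the Ginzburg--Landau approximation; once these are in hand, the hemisphere constraint closes the argument cleanly via the surjectivity of non-constant harmonic $2$-spheres.
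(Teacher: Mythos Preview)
The paper does not prove this theorem itself but quotes it from Lin--Wang \cite{LW2}. Your sketch is precisely the strategy carried out there: quasi-monotonicity from a Pohozaev-type identity with $L^2$ tension, an $\varepsilon$-regularity theorem for the Ginzburg--Landau approximation, and a blow-up/bubble argument in which the constraint $\d_\varepsilon^{(3)}\ge -1+a$ rules out all nontrivial harmonic $2$-spheres (nonconstant harmonic maps $\mathbb{S}^2\to\mathbb{S}^2$ being surjective); you have also correctly identified the delicate step, namely extracting a nonconstant bubble while simultaneously controlling $\tau_\varepsilon\in L^2$ and the Ginzburg--Landau scale.
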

We also denote by $\mathcal{Y}(L_1, L_2, a)$ the space that consists of 
$\d\in H^1(\Omega, \mathbb{S}^2)$ that are so-called stationary approximated harmonic maps, 
more precisely, 
  \begin{equation}
  \begin{cases} \Delta \d+|\nabla\d|^2\d=\tau \ {\rm{in}}\ \Omega,\\
   \int_{\Omega}(\nabla\d\odot\nabla\d):\nabla\varphi-\frac{1}{2}|\nabla \d|^2\nabla\cdot \varphi+\left\langle \tau, \varphi\cdot \nabla \d\right\rangle=0,
   \label{eqn:epsuitable}
   \end{cases}
 \end{equation}
for any $\varphi\in C_0^\infty(\Omega;\R^3)$, and
\begin{enumerate}
  \item {$\d^{(3)}(x)\geq -1+a$ for $x$ a.e. in $\Omega$,}
  \item $E(\d)=\frac{1}{2}\int_{\Omega}|\nabla \d|^2dx\leq L_1,$
  \item $\left\|\tau\right\|_{L^2(\Omega)}\leq L_2.$
\end{enumerate}
The following $H^1$ pre-compactness of stationary approximated harmonic maps was also shown by \cite{LW2}.
\begin{Theorem}\label{LW2.1}
For any $a\in(0,2]$, $L_1>0$ and $L_2>0$, the set $\mathcal{Y}(L_1, L_2, a)$ is pre-compact in $H_{\rm loc}^1(\Omega;\mathbb{S}^2)$. Namely, if $\left\{ \d_i \right\}\subset \mathcal{Y}(L_1, L_2, a)$ is a sequence of  
stationary approximated harmonic maps, with tensor fields $\left\{ \tau_i \right\}$, then there exist $\tau\in L^2(\Omega, \R^3)$ and a stationary approximated harmonic map $\d\in \mathcal{Y}(L_1, L_2, a)$, with tensor field $\tau$, namely,
$$\Delta \d+|\nabla\d|^2\d=\tau \text{ in }\Omega,$$
such that after passing to a possible subsequence, $\d_i\rightarrow \d$ in $H_{\rm loc}^1(\Omega, \mathbb{S}^2)$ and $\tau_i\rightharpoonup \tau$ in $L^2(\Omega; \R^3)$. Moreover, $\d\in W_{\rm loc}^{2, 2}(\Omega,\mathbb{S}^2)$.
\label{thm:Y}
\end{Theorem}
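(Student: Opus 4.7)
The plan is to combine an energy concentration-compactness argument with a topological obstruction coming from the hemisphere-type constraint $\d^{(3)}\ge -1+a$. Given a sequence $\{\d_i\}\subset \mathcal{Y}(L_1,L_2,a)$ with tension fields $\{\tau_i\}$, the bounds $E(\d_i)\le L_1$ and $\|\tau_i\|_{L^2}\le L_2$ yield, up to a subsequence, $\d_i\rightharpoonup \d$ weakly in $H^1(\Omega,\R^3)$, $\d_i\to \d$ in $L^2_{\loc}$ and a.e., and $\tau_i\rightharpoonup \tau$ weakly in $L^2(\Omega,\R^3)$. Since $|\d_i|=1$ and $\d_i^{(3)}\ge -1+a$ pointwise a.e., both constraints pass to $\d$, so $\d\in H^1(\Omega,\mathbb{S}^2)$ with $\d^{(3)}\ge -1+a$. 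The entire task therefore reduces to upgrading the weak $H^1_{\loc}$-convergence to strong $H^1_{\loc}$-convergence: once this is established, $|\nabla \d_i|^2\d_i\to |\nabla \d|^2\d$ in $L^1_{\loc}$, the limiting equation $\Delta \d+|\nabla \d|^2\d=\tau$ holds in the distributional sense, and $\d\in W^{2,2}_{\loc}$ follows from standard elliptic regularity for harmonic maps into spheres with $L^2$ right-hand side.

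Let $\nu$ denote the weak-$\ast$ Radon-measure limit of the nonnegative measures $|\nabla(\d_i-\d)|^2\,dx$; the goal is to prove $\nu\equiv 0$. Testing the stationarity identity \eqref{eqn:epsuitable} with vector fields of the form $\varphi(x)=\eta(x)(x-x_0)$ for standard cutoffs $\eta$ supported near $x_0\in \Omega$ gives, after the usual manipulations, an approximate monotonicity formula for the scaled energy $r^{-1}\int_{B_r(x_0)}|\nabla \d_i|^2$ in dimension three, the contribution of $\tau_i$ being controlled by Cauchy-Schwarz and the uniform $L^2$-bound. The standard consequences are that the concentration set $\Sigma=\supp\nu$ has locally finite one-dimensional Hausdorff measure, and that there exists a universal $\varepsilon_0>0$ with the upper density bound $\Theta^1(\nu,x_0)\ge \varepsilon_0$ at every $x_0\in \Sigma$.

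At a concentration point $x_0$, I would perform a blow-up: choose radii $r_i\to 0$ capturing a fixed fraction of the concentrating energy on $B_{r_i}(x_0)$ together with an oscillation-capturing normalization, and define $\tilde{\d}_i(y)=\d_i(x_0+r_i y)$, $\tilde\tau_i(y)=r_i^2\tau_i(x_0+r_iy)$. Then $\Delta \tilde{\d}_i+|\nabla\tilde{\d}_i|^2\tilde{\d}_i=\tilde\tau_i$ with $\|\tilde\tau_i\|_{L^2(B_R)}\le r_i^{1/2}L_2\to 0$, so after a further subsequence (and a reduction to a two-dimensional slice exploiting the one-dimensional structure of $\Sigma$), the blow-up limit $\omega$ is a nonconstant stationary harmonic map into $\mathbb S^2$ with finite energy and inheriting $\omega^{(3)}\ge -1+a$ a.e. By the Sacks-Uhlenbeck removable singularity theorem, $\omega$ extends to a nonconstant harmonic map $\omega:S^2\to \mathbb S^2$.

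The topological obstruction now closes the argument: any nonconstant harmonic map $S^2\to \mathbb S^2$ is $\pm$-holomorphic (viewing $\mathbb S^2\cong \mathbb{C}\mathbb{P}^1$), hence of nonzero degree and surjective. But $\omega^{(3)}\ge -1+a>-1$ forces $\omega$ to avoid a neighborhood of the south pole, a contradiction. Therefore $\nu\equiv 0$, strong $H^1_{\loc}$-convergence follows, and the limit equation and regularity claims follow as indicated above. The main obstacle is the blow-up step in dimension three: one must select scales so that $\tilde\tau_i\to 0$ in $L^2_{\loc}$ while the rescaled sequence retains a definite amount of energy on a fixed ball and does not degenerate to a constant, and one must reduce a potentially three-dimensional blow-up limit to a two-dimensional harmonic sphere; this requires a careful neck analysis coupled with the one-dimensional structure of $\Sigma$ provided by the monotonicity formula.
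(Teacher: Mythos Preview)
The paper does not give its own proof of this theorem; it is quoted from Lin--Wang \cite{LW2} and used as a black box. Your outline is essentially the argument carried out there: derive an approximate monotonicity formula from the stationarity identity, show the defect measure $\nu$ is supported on a set of locally finite $\mathcal H^1$-measure with a uniform lower density, blow up to produce (after reducing to a two-dimensional slice along the tangent line of $\Sigma$) a nonconstant finite-energy harmonic map $\omega:\R^2\to\mathbb S^2$, extend to a harmonic sphere via Sacks--Uhlenbeck, and rule it out because holomorphic/antiholomorphic maps $S^2\to S^2$ are surjective while $\omega^{(3)}\ge -1+a>-1$.

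Two remarks. First, to conclude $\d\in\mathcal Y(L_1,L_2,a)$ you also need the \emph{stationarity} identity \eqref{eqn:epsuitable} for the limit, not only the PDE; this is immediate once you have $\nabla\d_i\to\nabla\d$ in $L^2_{\loc}$, but it should be said. Second, the step you correctly flag as the main obstacle requires more than finite $\mathcal H^1$-measure of $\Sigma$: one needs the $1$-\emph{rectifiability} of the defect measure (so that $\mathcal H^1$-a.e.\ point admits an approximate tangent line), which in \cite{LW2} comes from Preiss-type structure theory combined with the monotonicity formula, and then a careful choice of blow-up center and scale so that the rescaled maps have uniformly bounded energy on fixed balls, $\tilde\tau_i\to 0$ in $L^2_{\loc}$, and the limit is translation-invariant in the tangent direction. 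This is where the real work in \cite{LW2} sits, and your proposal accurately isolates it without filling it in.
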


Now we sketch the proof  the compactness of $\nabla \d_{\varepsilon}$ in $L^2_{loc}(\Omega\times [0,T])$. 
It follows from Fatou's lemma and \eqref{global_bd1}  that
 { \begin{equation*}
      \begin{array}{l}
	\int_{0}^{T}\liminf_{\varepsilon\rightarrow0}\int_{\Omega}|\Delta \d_\varepsilon-\f_\varepsilon(\d_\varepsilon)|^2\leq C_0.
      \end{array}
      \label{}
    \end{equation*}}
    We decompose $[0,T]$ into the sets of``good time slices'' and ``bad time slices''. {For $\Lambda\gg1$}, set
    $$\mathcal{G}_\Lambda^T:=\left\{ t\in[0,T]:\liminf_{\varepsilon\rightarrow0}\int_{\Omega}|\Delta \d_\varepsilon-f_\varepsilon(\d_\varepsilon)|^2(t)\leq \Lambda \right\},$$ 
    and
    $$\mathcal{B}_\Lambda^T:=[0,T]\setminus \mathcal{G}_\Lambda^T=\left\{ t\in [0,T]:\liminf_{\varepsilon\rightarrow0}{\int_{\Omega}}|\Delta \d_\varepsilon-\f_\varepsilon(\d_\varepsilon)|(t)>\Lambda \right\}.$$
 From Chebyshev's inequality, we have
 \begin{equation}
   |\mathcal{B}_\Lambda^T|\leq \frac{C_0}{\Lambda}.
   \label{eqn:Bad}
 \end{equation}
 For any $t\in \mathcal{G}_\Lambda^T$, set $\tau_\varepsilon(t)=\left( \Delta \d_\varepsilon-\f_\varepsilon(\d_\varepsilon) \right)(t).$ 
 Then  Lemma \ref{lemma:MAX1} and \ref{lemma:MAX2} imply
 that $\left\{ \d_\varepsilon(t) \right\}\subset \mathcal{X}(C_0, \Lambda, 1)$.
 Theorem \ref{thm:X} then implies that
\begin{equation*}
  \left\{
    \begin{array}{ll}
      \d_\varepsilon(t)\rightarrow \d(t) &\text{ in }H_{\loc}^1(\Omega),\\
      F_\varepsilon(\d_\varepsilon)\rightarrow 0 & \text{ in }L_{\loc}^1(\Omega),\\
      \tau_\varepsilon(t)\rightharpoonup \tau(t) & \text{ in }L^2(\Omega).
    \end{array}
    \right.
    \label{}
  \end{equation*}
 For any $\varphi\in C_0^\infty(\Omega;\R^3)$, {multiplying} $\tau_\varepsilon(t)$ by $\varphi\cdot \nabla\d_\varepsilon$ and integrating over $\Omega$ yields
 \begin{equation}
   \int_{\Omega}(\nabla\d_\varepsilon(t)\odot\nabla\d_\varepsilon(t)):\nabla\varphi
   -\big( \frac{1}{2}|\nabla \d_\varepsilon(t)|^2+F_\varepsilon(\d_\varepsilon(t)) \big)\nabla\cdot \varphi+\left\langle \tau_\varepsilon(t), \varphi\cdot \nabla \d_\varepsilon(t) \right\rangle=0.
   \label{eqn:epsuitable}
 \end{equation}
 Passing limit $\varepsilon\to 0$  in {\eqref{eqn:epsuitable}}, we get
 \begin{equation*}
   \int_{\Omega}(\nabla\d(t)\odot\nabla\d(t)):\nabla\varphi-\frac{1}{2}|\nabla \d(t)|^2\nabla\cdot \varphi+\left\langle \tau(t), \varphi\cdot \nabla \d(t) \right\rangle=0.
   \label{}
 \end{equation*}
 Hence $\d(t)\in\mathcal{Y}(C_0, \Lambda, 1)$ is a stationary approximated harmonic map.  Next we {want} to show that $\d_\varepsilon\rightarrow \d$ strongly in $L_t^2 H_x^1$. 
To see this, we claim that for any compact $K\subset\subset \Omega$,
 \begin{equation}
   \lim_{\varepsilon\rightarrow 0}\int_{K\times \mathcal{G}_\Lambda^T}|\nabla(\d_\varepsilon-\d)|^2=0.
   \label{eqn:contr1}
 \end{equation}
For, otherwise, there exist $\delta_0>0$, ${K}\subset\subset \Omega$ and $\varepsilon_i\rightarrow 0$ such that
 \begin{equation}
   {\int_{{K}\times \mathcal{G}_\Lambda^T}}|\nabla(\d_{\varepsilon_i}-\d)|^2\geq \delta_0.
    \label{eqn:prcontr1}
 \end{equation}
 From \eqref{eqn:weakL2conv}, we have
 \begin{equation}
   \lim_{\varepsilon_i\rightarrow0}\int_{{K}\times \mathcal{G}_\Lambda^T}|\d_{\varepsilon_i}-\d|^2=0.
   \label{eqn:prcontr2}
 \end{equation}
 By Fubini's theorem, \eqref{eqn:prcontr1} and \eqref{eqn:prcontr2}, 
 there would exist $t_i\in \mathcal{G}_\Lambda^T$ such that
 \begin{equation*}
   \left\{
     \begin{array}{l}
       \lim_{\varepsilon_i\rightarrow0}\int_{{K}}|\d_{\varepsilon_i}({t_i})-\d(t_i)|^2=0, \\
       \int_{{K}}|\nabla(\d_{\varepsilon_i}(t_i)-\d(t_i))|^2\geq \frac{2\delta_0}{T}.
     \end{array}
     \right.
     \label{}
   \end{equation*}
Thus $\left\{ \d_{\varepsilon_i}(t_i) \right\}\subset \mathcal{X}(C_0, \Lambda, 1)$ 
and $\left\{ \d(t_i) \right\}\subset \mathcal{Y}(C_0, \Lambda, 1)$. It follows from Theorem \ref{thm:X} and 
Theorem \ref{thm:Y} that there exist $\d_1, \d_2\in \mathcal{Y}(C_0, \Lambda, 1)$ such that
   \begin{equation*}
     \d_{\varepsilon_i}(t_i)\rightarrow \d_1 \text{ and }\d(t_i)\rightarrow\d_2 \text{ strongly in } H^1(\Omega).
     \label{}
   \end{equation*}
   Therefore we would have 
   \begin{equation*}
     \int_{{K}}|\nabla(\d_1-\d_2)|^2=\lim_{i\rightarrow\infty}\int_{{K}}|\nabla\left( \d_\varepsilon(t_i)-\d(t_i) \right)|^2\geq \frac{2\delta_0}{T},
     \label{}
   \end{equation*}
and
  $$\int_{{K}}|\d_1-\d_2|^2=\lim_{i\rightarrow \infty}\int_{{K}}|\d_{\varepsilon_i}(t_i)-\d(t_i)|^2=0.$$
This is clearly impossible. Thus the claim is true. 

We can also follow the proof of Theorem \ref{thm:X} in \cite{LW2} to conclude
that the small energy regularity criteria holds for every $(x,t)\in K\times\mathcal{G}_\Lambda^T$ 
so that a finite covering argument, together with estimates for Claim 4.5 in \cite{LW2},
yields
  \begin{equation}
      \lim_{\varepsilon\rightarrow 0}\int_{K\times \mathcal{G}_\Lambda^T}F_\varepsilon(\d_\varepsilon)=0.
      \label{eqn:Fepsilon}
  \end{equation}
Hence we have that
$$\lim_{\varepsilon\rightarrow 0}\Big[\|\d_\varepsilon-\d\|_{L^2_t H_x^1(K\times \mathcal{G}_\Lambda^T)}^2+\int_{K\times \mathcal{G}_\Lambda^T}F_\varepsilon(\d_\varepsilon)\Big]=0.$$
On the other hand, it follows from \eqref{global_bd1} and \eqref{eqn:Bad} that
\begin{eqnarray*}
&&\left\|\d_\varepsilon-\d\right\|_{L_t^2H_x^1(\Omega\times \mathcal{B}_\Lambda^T)}^2
+\int_{\Omega\times \mathcal{B}_\Lambda^T}F_\varepsilon(\d_\varepsilon)\\
&&\leq
  C \Big(\sup_{t>0}\int_{\Omega} (|\u_\varepsilon|^2+|\nabla\d_\varepsilon|^2+F_\varepsilon(\d_\varepsilon))\Big)
  \big|\mathcal{B}_\Lambda^T\big|\leq \frac{C}{\Lambda}.
  \label{}
\end{eqnarray*}
Therefore, we would arrive at
\begin{equation*}
  \lim_{\varepsilon\rightarrow 0}\Big[ \left\|\d_\varepsilon-\d\right\|_{L_t^2H_x^1(K\times[0,T])}^2+\int_{K\times [0,T]}F_\varepsilon(\d_\varepsilon) \Big]\leq \frac{C}{\Lambda}.
  \label{}
\end{equation*}
Sending $\Lambda\to\infty$ yields that 
\begin{equation*}
   \lim_{\varepsilon\rightarrow 0}\Big[ \left\|\d_\varepsilon-\d\right\|_{L_t^2H_x^1(K\times[0,T])}^2+\int_{K\times [0,T]}F_\varepsilon(\d_\varepsilon) \Big]=0.
  \label{}
\end{equation*}
Therefore we can conclude that $\u$ solves the equation \eqref{eqn:weaku}, provided
we can verify that $\mu(\theta_\varepsilon)\nabla \u_\varepsilon
\rightharpoonup \mu(\theta)\nabla\u$ weakly in $L^2(\Omega\times [0,T])$, which will be verified below.

Next we turn to the convergence of $\theta_{\varepsilon}$.
For $\alpha\in (0, 1)$, set $H(\theta_{\varepsilon})=(1+\theta_{\varepsilon})^\alpha$.
Then from (\ref{eqn:approximateEntropy}) we have
   \begin{eqnarray}
 && \partial_t(1+\theta_{\varepsilon})^\alpha+\u_{\varepsilon}\cdot \nabla(1+\theta_{\varepsilon})^\alpha
 \nonumber\\
 &&\geq-\div\left( \alpha(1+\theta_{\varepsilon})^{\alpha-1}\q_{\varepsilon} \right)
  +\alpha(1+\theta_{\varepsilon})^{\alpha-1}\left( \mu(\theta_{\varepsilon})|\nabla \u_{\varepsilon}|^2+|\Delta \d_{\varepsilon}-\f_\varepsilon(\d_{\varepsilon})|^2 \right)\nonumber\\
&&  +\alpha(\alpha-1)(1+\theta_{\varepsilon})^{\alpha-2}\q_{\varepsilon}\cdot \nabla \theta_{\varepsilon}.
  \label{eqn:approximateEnt}
\end{eqnarray}
Integrating \eqref{eqn:approximateEnt} over ${\Omega\times[0,T]}$, 
by the assumption \eqref{eqn:coeffbound} on $\mu$, and the bound
\eqref{global_bd1} on $\u_{\varepsilon}, \d_{\varepsilon}$ and $\theta_{\varepsilon}$, 
we can derive that
   \begin{equation*}
     \sup_{\varepsilon>0} \sup_{0<t<{T}}\int_{\Omega}(1+\theta_{\varepsilon})^{\alpha-2}|\nabla \theta_{\varepsilon}|^2  <\infty.
     \label{}
   \end{equation*}
Therefore we conclude that $\theta_{\varepsilon}^{\frac{\alpha}{2}}\in L_t^2 H_x^1$
and $\theta_{\varepsilon}\in L^\infty_t L_x^1$ are uniformly bounded.  
By interpolation, we would have that for {$1\leq p<5/4$},
   \begin{equation*}
     \sup_{\varepsilon>0}\left\|\theta_{\varepsilon}\right\|_{L_t^p W_x^{1, p}(\Omega\times [0,T])}<\infty.
     \label{}
   \end{equation*}
   From the equation (\ref{eqn:modified-system})$_4$, we have that for {$1\leq q<\frac{30}{23}$},
   \begin{align*}
     \sup_{\varepsilon>0} \left\|\partial_t \theta_{\varepsilon}\right\|_{L^1_tW^{-1,q}_x}&\leq \sup_{\varepsilon>0}\Big(C\|\u_{\varepsilon}\theta_{\varepsilon}\|_{L^q_tL^q_x}+C\|\nabla \theta_{\varepsilon}\|_{L^q_tL_x^q}\\
     &+C\left\||\nabla \u_{\varepsilon}|^2+|\Delta \d_{\varepsilon}-\f_\varepsilon(\d_{\varepsilon})|^2\right\|_{L^1_tL^1_x}\Big)\\
     &\leq C\sup_{\varepsilon>0}\Big(\left\|\u_{\varepsilon}\right\|_{L^{{\frac{10}{3}}}_tL^{{\frac{10}{3}}}_x}\left\|\theta_{\varepsilon}\right\|_{L^{\frac{10q}{10-3q}}_tL_x^{\frac{10q}{10-3q}}}+\left\|\nabla \theta_{\varepsilon}\right\|_{L_t^qL_x^q}\Big)+C \\
     &<\infty.
     \label{}
   \end{align*}
Hence, by Aubin-Lions' compactness Lemma \cite{Simon1996} again, 
up to a subsequence, there exists $\theta\in L_t^\infty L_x^1\cap L_t^p W_x^{1, p}$ for $1\le p<\frac54$
such that
   \begin{equation*}
     \left\{
       \begin{array}{ll}
	 \theta_{\varepsilon}\rightarrow \theta & \text{ in }L^p(\Omega\times {(0,T)}),\\
	 \nabla \theta_{\varepsilon}\rightharpoonup \nabla \theta & \text{ in }L^p(\Omega\times {(0,T)}),
       \end{array}
       \right.
       \label{}
     \end{equation*}
as ${\varepsilon\rightarrow 0}$.

After taking another subsequence, we may assume that $(\u_{\varepsilon}, \d_{\varepsilon}, \theta_{\varepsilon})$ converge to $(\u, \d, \theta)$ a.e. in $\Omega\times [0, T].$

Since $\{\mu(\theta_\varepsilon)\}$ is uniformly bounded in $L^\infty(\Omega\times[0,T])$,
$\mu(\theta_{\varepsilon})\rightarrow \mu(\theta)$ a.e. in {$\Omega\times[0,T]$}
 and $\nabla \u_{\varepsilon}\rightharpoonup \nabla \u$ in $L^2({\Omega\times[0,T]})$, it follows
 that 
 $$\mu(\theta_{\varepsilon})\nabla \u_{\varepsilon}\rightharpoonup \mu(\theta)\nabla \u
 \ {\rm{ in }}\ L^2({\Omega\times[0,T]}).$$
 Thus we verify that \eqref{eqn:weaku} holds.

Taking the $L^2$ inner product of 
$\u_\varepsilon$, $\d_\varepsilon$, $\theta_\varepsilon$ 
in (\ref{eqn:modified-system}) with respect to $\u_{\varepsilon}, {-\Delta \d_{\varepsilon}+\f_\varepsilon(\d_{\varepsilon})}, 1$, and adding the resulting equations together,
we have the following energy law:
 \begin{equation}
   \frac{d}{dt}\int_{\Omega}\Big( \frac{1}{2}|\u_{\varepsilon}|^2+\frac{1}{2}|\nabla \d_{\varepsilon}|^2+F_\varepsilon(\d_{\varepsilon})+\theta_{\varepsilon} \Big)=0.
   \label{eqn:RegTotalEnergyConser}
 \end{equation}
Taking $\varepsilon\to 0$, this implies that $|\d|=1$ and 
\begin{equation*}
\int_\Omega \big(\frac12|\u|^2+\frac12|\nabla\d|^2+\theta\big)(t)
\leq \int_\Omega \big(\frac12{|\u_0|^2}+\frac12{|\nabla \d_0|}+\theta_0\big), \ \forall 0\le t\le T.
  \label{}
\end{equation*}
Hence the global energy inequality \eqref{weaktotal} holds.

It remains to show that \eqref{eqn:Entropy} follows
by passing limit $\varepsilon\to 0$ in \eqref{eqn:RelaxedEntropy}. 
This can be done exactly as in the last part of the previous section. 
For any smooth, nondecreasing, concave function $H$, and $\psi\in C_0^\infty(\overline{\Omega}\times[0,T))$,
recall from \eqref{entropy1} that
\begin{equation}
  \begin{split}
    &\int_{0}^{T}\int_{\Omega}\left( H(\theta_\varepsilon)\partial_t \psi+(H(\theta_\varepsilon)\u_\varepsilon-H'(\theta_\varepsilon)\q_\varepsilon )\cdot \nabla\psi\right)\\
    &\leq -\int_{0}^{T}\int_{\Omega}[H'(\theta_\varepsilon)(\mu(\theta_\varepsilon)|\nabla u_\varepsilon|^2+|\Delta\d_\varepsilon -\f_\varepsilon(\d_\varepsilon)|^2)-H''(\theta_\varepsilon)\q_\varepsilon\cdot \nabla\theta_\varepsilon]\psi\\
    & \quad -\int_{\Omega}H(\theta_0)\psi(\cdot , 0).
  \end{split}
  \label{entropy616}
\end{equation}
Assume $H(0)=0$. Then the concavity of $H$, $0\leq H'(\theta_{\varepsilon})\leq H'(\essinf_{\Omega}\theta_0)$,
 and the uniform bound on $\theta_\varepsilon$ imply that 
  $$\{H(\theta_\varepsilon)\} \ \mbox{is bounded in} \ \ L^\infty_t L^1_x
\cap L^p_t W^{1,p}_x(\Omega\times [0,T]), \ \forall 1<p<\frac54.
$$
Together with the bounds on $\u_\varepsilon, \d_\varepsilon$, and \eqref{entropy616}, we have that
\begin{eqnarray*}
	&&\int_0^T\int_\Omega H''(\theta_\varepsilon) \q_\varepsilon\cdot\nabla\theta_\varepsilon\psi\\
	&&=\int_0^T\int_\Omega (|\sqrt{-H''(\theta_\varepsilon) k(\theta_\varepsilon)\psi}\nabla \theta_\varepsilon|^2
	+|\sqrt{-H''(\theta_\varepsilon) h(\theta_m)\psi}(\nabla \theta_\varepsilon\cdot\d_\varepsilon)|^2)
\end{eqnarray*}
is uniformly bounded. By an argument similar to \eqref{conv1}, we can show that
\begin{equation}
  \int_{0}^{T}\int_{\Omega}-H''(\theta)\q\cdot\nabla \theta \psi\leq \liminf_{\varepsilon\to0} 
  \int_{0}^{T}\int_{\Omega}-H''(\theta_\varepsilon)\q_\varepsilon\cdot \nabla\theta_\varepsilon \psi.
  \label{conv617}
\end{equation}
Observe that 
\begin{equation*}
\begin{split}
  \Delta \d_\varepsilon-\f_\varepsilon(\d_\varepsilon)=\partial_t \d_\varepsilon+\u_\varepsilon\cdot \nabla\d_\varepsilon
  \rightharpoonup \partial_t \d+\u\cdot \nabla\d=\Delta\d+|\Delta \d|^2 \d
  \quad\text{in}\quad L^2(\Omega\times[0,T]),
  \end{split}
\end{equation*}
and $\left\{ H'(\theta_\varepsilon) \right\}$ is uniformly bounded in $L^\infty(\Omega\times[0,T])$.
It follows from the lower semicontinuity that
\begin{eqnarray}\label{conv618}
&&\int_0^T\int_\Omega\big[H'(\theta)(\mu(\theta)|\nabla\u|^2+|\Delta\d+|\nabla\d|^2 \d|^2)\psi\nonumber\\
&&\le\liminf_{\varepsilon\to 0}
\int_0^T\int_\Omega\big[H'(\theta_\varepsilon)(\mu(\theta_\varepsilon)|\nabla\u_\varepsilon|^2+|\Delta\d_\varepsilon-\f_\varepsilon(\d_\varepsilon)|^2)\psi.
\end{eqnarray}
On the other hand, since 
$$H(\theta_\varepsilon)\to H(\theta), \  H(\theta_\varepsilon)\u_\varepsilon\to H(\theta)\u \ \ {\rm{in}}\ \ L^1(\Omega\times [0,T]),$$
and
$$H'(\theta_\varepsilon)\q_\varepsilon\rightharpoonup H'(\theta) \q \ \ {\rm{in}}\ \ L^1(\Omega\times [0,T]),$$
we have
\begin{eqnarray}\label{conv619}
&&\int_0^T\int_\Omega \big(H(\theta)\partial_t\psi+(H(\theta)\u-H'(\theta)\q)\cdot\nabla\psi\big)\nonumber\\
&&=\lim_{\varepsilon\to 0}
\int_0^T\int_\Omega \big(H(\theta_\varepsilon)\partial_t\psi+(H(\theta_\varepsilon)\u_\varepsilon-H'(\theta_\varepsilon)\q_\varepsilon)\cdot\nabla\psi\big).
\end{eqnarray}
Therefore \eqref{weakEnt} follows by passing $\varepsilon\to0$ in \eqref{entropy616} and applying
\eqref{conv617}, \eqref{conv618}, and \eqref{conv619}.  This completes the construction of a global weak solution
to \eqref{eqn:1.4}. 
\end{proof}

\bigskip
\noindent{\bf Acknowledgments}. The paper was complete while the second author 
was a visiting PhD student of Purdue University. She would like to express her gratitude to
the Department of Mathematics for the hospitality. Both the first author and third author are partially
supported by NSF grant 1764417.

\bigskip

      \bibliographystyle{amsplain}

\end{document}